\documentclass[a4paper,12pt]{article}
\usepackage{hyperref}
\usepackage[english]{babel}
\usepackage[T1]{fontenc}
\usepackage[utf8]{inputenc}

\usepackage{amsfonts}
\usepackage{amssymb}
\usepackage{amsmath}
\usepackage{graphicx}
\usepackage{cite}
\usepackage{epsfig}
\usepackage{psfrag}
\usepackage{url}
\usepackage{amsthm}
\usepackage{enumerate}
\usepackage{comment}

\newtheorem{proposition}{Proposition}[section]
\newtheorem{definition}{Definition}[section] 
  
\newtheorem{corollary}{Corollary}[section] 
\newtheorem{theorem}{Theorem}[section] 
\newtheorem{remark}{Remark}[section]  
\newtheorem{example}{Example}[section]

\newcommand{\dex}[1]{\operatorname{d}\!}
\def\g{\mathfrak{g}}
\def\gl{\mathfrak{gl}}

\date{}

\title{\bf Pre-Lie Structures for Semisimple Lie Algebras}

\author{Xerxes D. Arsiwalla$^{1, }$\footnote{\url{x.d.arsiwalla@gmail.com}}  { }  {  } Fernando Olivie M{\'e}ndez M{\'e}ndez$^{2, }$\footnote{\url{jxvm0952@leeds.ac.uk}}    \\  
{}  \\
{\it \small $^{1}$Wolfram Institute for Computational Foundations of Science,  USA}\\  
{\it \small $^{2}$University of Leeds, UK}   
}

\begin{document}
\maketitle

\begin{abstract}

We address the problem of admissibility of pre-Lie structures associated with a given Lie algebra, particularly, semisimple Lie algebras over ${\mathbb C}$. Such structures are collectively referred to as Lie-admissible algebras, which are a class of nonassociative algebras such that the commutator bracket over these algebras satisfies the Jacobi identity. Among the five classes of nonassociative Lie-admissible algebras, left-symmetric algebras (LSAs) and right-symmetric algebras (RSAs), are known to be non-admissible by semisimple Lie algebras of finite dimension $n \geq 3$. Here, we examine the remaining classes starting with those corresponding to the subgroup generated by permutations of order 2: $(1 \; 3)$. These appear in the literature as anti-flexible algebras (AFAs). We discuss properties of AFAs and provide examples of finite-dimensional representations. AFAs geometrically correspond to richer structures than the flat torsion-free affine connections associated with left-symmetric algebras (LSAs) or right-symmetric algebras (RSAs). We compute Lie-admissibility criteria for AFAs and determine a few simple solution classes. Not surprisingly, solvable Lie algebras admit AFAs. Concerning semisimple ones, we report an explicit counterexample demonstrating an AFA admissible by ${\mathfrak sl(2, \, {\mathbb C})}$. We then discuss the remaining two classes of nonassociative Lie-admissible algebras, the $A_3$-associative and $S_3$-associative types. Finally, we prove that  $S_3$-associative algebras are universal pre-Lie structures for any Lie algebra over ${\mathbb C}$, including semisimple ones. 

\end{abstract}

\vspace{2pc}
{\it Keywords}: Nonassociative Algebras, Lie-Admissible Algebras, Pre-Lie Structures, Semisimple Lie Algebras.  
  
\clearpage

\tableofcontents

\section{Introduction}

The earliest examples of Lie-admissible algebras originated in the work of Cayley on rooted tree algebras \cite{cayley1857xxviii}, and later in Koszul's and Vinberg's investigations on affine structures on manifolds \cite{koszul1961domaines, vinberg1963convex}. Then, almost fourteen years later, it was Milnor's question concerning the admissibility of affine structures on manifolds associated to solvable Lie groups, that regenerated excitement in the study of pre-Lie structures and their associated Lie-admissible algebras for various types of Lie groups    \cite{milnor1977fundamental}.     

Lie-admissible algebras include five classes of nonassociative algebras (and one class of associative algebras) that can be thought of as precursors to Lie algebras \cite{santilli1968introduction, remm2002operades, goze2004lie}. 
A pre-Lie structure over a field ${\mathbb F}$ is a Lie-admissible algebra $\left( {\cal A}, \cdot  \right)$ with a bilinear product $(x , y) \to x \cdot y$ such that the commutator $[ x , y ] = x \cdot y - y \cdot x$ defines a Lie algebra. Pre-Lie structures are closely related to affine structures on manifolds associated to Lie groups. The most frequently discussed examples of Lie-admissible algebras are Left-Symmetric Algebras (LSAs) and Right-Symmetric Algebras (RSAs). LSAs are often referred to as Vinberg algebras. RSAs are sometimes called pre-Lie algebras (though some authors use the term "pre-Lie algebras" more generally to include both of these classes). LSAs define flat torsion-free left-invariant affine structures, and thus carry a geometric interpretation. RSAs are the "opposite algebras" of LSAs. The correspondence between these algebraic structures and the underlying geometry can be exploited for extracting geometric data from  algebraic properties of associated pre-Lie structures on the manifold.  

Lie-admissible algebras (except for the associative class) have an interesting nonassociative structure. The triple product, also known as the "associator", defined by $(x , y , z) = (x \cdot y) \cdot z - x \cdot (y \cdot z)$, is generally non-vanishing for given $x, y, z \in {\cal A}$.  In particular,  LSAs and RSAs are defined by specific relations among their associators. An LSA satisfies $(x , y , z) = (y , x , z)$; whereas an RSA satisfies $(x , y , z) = (x , z , y)$ for all $x, y, z \in {\cal A}$. It follows from these definitions that LSAs and RSAs are Lie-admissible. As mentioned, LSAs and RSAs are opposite algebras with respect to each other. This means that transforming the product $x \cdot y \to y \cdot x$ maps an LSA to an RSA, and vice versa. An excellent review of these algebras can be found in \cite{burde2006left}.   

In addition to LSAs and RSAs, other classes of Lie-admissible algebras have also been investigated \cite{goze2004lie}. These are in one-to-one correspondence with the subgroups of the symmetric group $S_3$. Among those, we will start by examining Lie-admissible algebras corresponding to the subgroup generated by permutations of the 2-cycle $(1 \; 3)$. These are also called Anti-Flexible Algebras (AFAs), which were originally investigated for their own sake as generalizations of the flexible law \cite{rodabaugh1965generalization, rodabaugh1967semisimple, rodabaugh1972antiflexible, bhandari1972classification, myung1978nonflexible}. In the context of Lie-admissible algebras, they have also been referred to as Center-Symmetric Algebras \cite{hounkonnou2016center}. In this work, we discuss various properties of AFAs and provide examples of finite-dimensional representations. Geometrically, AFAs correspond to richer structures than flat torsion-free affine connections associated to LSAs or RSAs. This follows directly from the anti-flexible / center-symmetric condition that is satisfied by left- and right-insertion operators associated with AFAs. We then move to the remaining two classes of Lie-admissible algebras, namely, the $A_3$-associative and $S_3$-associative algebras and their finite-dimensional representations.

The main focus of our work concerns pre-Lie structures admissible by semisimple Lie algebras over ${\mathbb C}$. It is known that there are no admissible LSAs or RSAs for such Lie algebras for finite dimensions $n \geq 3$   \cite{helmstetter1979radical, burde1994left, burde1998simple, burde1999left,  burde2006left}. On the other hand, solvable Lie algebras do admit LSAs and RSAs. What happens in the case of AFAs? To address this, we first compute Lie-admissibility criteria for AFAs and determine a few simple  solution classes. Given any Lie algebra, we can check whether a particular solution class of AFAs is admissible or not. It turns out solvable Lie algebras do admit AFAs. However, this should not be surprising, since AFAs are labeled via the same conjugacy class of the group $S_3$ as LSAs and RSAs. For this reason, one might at first expect that semisimple Lie algebras too would not admit AFAs. But this is where things get interesting. We report an explicit counterexample which demonstrates an AFA admissible by ${\mathfrak sl(2, \, {\mathbb C})}$. We also show an example of an $A_3$-associative algebra admissible by ${\mathfrak su(2)}$. In fact, we prove that not only semisimple Lie algebras, but every Lie algebra (over ${\mathbb C}$) admits a pre-Lie structure (often of multiple Lie-admissible classes).

The outline of this paper is as follows. Most of Section 2 covers preliminary definitions and results: we define AFAs, prove various results related to these algebras, discuss the geometric interpretation of AFAs, and provide examples of finite-dimensional AFAs. In Section 3 we discuss Lie-admissibility criteria for AFAs, and some of their solution classes. We show how these conditions can be used to compute admissible AFAs for various Lie algebras. In Section 4 we discuss $A_3$- and $S_3$-associative algebras, along with examples of their finite-dimensional realizations. Section 5 discusses different classes of pre-Lie structures admissible by semisimple Lie algebras. Finally, in Section 6 we conclude with closing remarks and open questions.

\section{Anti-Flexible Algebras (AFAs) }

Let $(A,\cdot)$ be an algebra that is a $\mathbb{F}$-vector space together with a bilinear product $\cdot: A \times A \longrightarrow A$. Consider a permutation $\sigma \in  S_3$ together with the following condition on the associators: 
\begin{equation} \label{eq:associator_sigma}
    (x_1,x_2,x_3) \, = \, (x_{\sigma(1)},x_{\sigma(2)},x_{\sigma(3)}) 
\end{equation}
for any $ x_1,x_2,x_3  \in A$.  Notice that when the condition in eq.~\eqref{eq:associator_sigma} is satisfied for the permutation $\sigma = ( 1 \hspace{1mm} 2 )$ (respectively, $\sigma = (2 \hspace{1mm} 3)$) the algebra in question is the well-known left-symmetric algebra (respectively, right-symmetric algebra). Thus, the action of non-trivial elements of $S_3$ on the associator leads to definitions of nonassociative algebras. A particularly interesting case for our discussion here will be the action of the permutation $\sigma = (1 \hspace{1mm} 3)$ on the associator.  The complete classification of algebras resulting from the action of $S_3$ on the associator was first reported in \cite{goze2004lie}. What was recognized by those authors was that each sub-group of the symmetric group $S_3$ corresponds to distinct classes of (non)associative algebras, each of which turn out to be Lie-admissible (i.e., their commutator satisfies the Jacobi identity). These classes are defined by the condition \cite{goze2004lie}:  
\begin{eqnarray}
    \sum_{\sigma \
\in \ G}{{\rm \ sgn}(\sigma) \, (x_{\sigma(1)}, x_{\sigma(2)}, x_{\sigma(3)})}  = 0 
\label{subgsum}
\end{eqnarray}
where $G$ is a given subgroup of $S_3$, and ${\rm \ sgn}(\sigma)$ denotes the signature of the permutation $\sigma \in G$. When $G$ is just the identity, this corresponds to associative algebras;  whereas the other five subgroups of $S_3$ each yield distinct classes of nonassociative algebras. As mentioned, the order-2 subgroup that includes the element $\sigma = (1 \hspace{1mm} 3)$ will be particularly relevant to us.

\begin{definition}
We define an \textbf{Anti-Flexible Algebra (AFA)} as an algebra $\left( {\cal A}, \cdot  \right)$ over a field ${\mathbb F}$ equipped with a bilinear product  $(x , y) \mapsto x \cdot y$ such that the associator satisfies 
\begin{eqnarray}
    (x , y , z) = (z , y , x)
\label{def2.1}
\end{eqnarray}
for all $x, y, z \in {\cal A}$. 
\end{definition}
 
Using this definition, it is straightforward to see that an AFA is Lie-admissible. This follows using the identity (in the literature, this is also known as the Akivis identity    \cite{akivis1976local}): 
\begin{equation} \label{eq:jacobi_identity}
    [ x_ 1, [ x_ 2,x_ 3] ] + [ x_ 2, [ x_ 3, x_ 1] ] + [ x_ 3, [ x_ 1, x_ 2] ]  \, = \,  \sum_{\sigma \
\in \ S_ 3}{{\rm \ sgn}(\sigma) \, (x_{\sigma(1)}, x_{\sigma(2)}, x_{\sigma(3)})} 
\end{equation}
where ${\rm \ sgn}(\sigma)$ denotes the signature of the permutation $\sigma$, and the commutator is defined in the usual way, i.e., $[ x , y ] := x \cdot y - y \cdot x$. 
While the above identity holds in any algebra defined by the commutator product, it is the vanishing of the right-hand side (RHS) that reduces eq.~\eqref{eq:jacobi_identity} to the Jacobi identity, thus realizing a Lie algebra. As can be readily checked, in the case of an AFA, the sum of associators on the RHS of eq.~\eqref{eq:jacobi_identity}  indeed vanishes.  


Similarly to left-symmetric structures, one can also define anti-flexible structures as follows: 
\begin{definition}
An \textbf{Anti-Flexible Structure} in a Lie algebra \(\mathfrak{g}\)  is a product \((x,y) \mapsto x \cdot y\) that satisfies the following conditions: 
\begin{enumerate}
\item \((\mathfrak{g}, \cdot) \) is an anti-flexible algebra (AFA);
\item \([x,y]_{\mathfrak{g}} = x \cdot y - y \cdot x\) for all \(x,y \in \mathfrak{g}\).
\end{enumerate}   
 In this case, we say that $\mathfrak{g}$ admits an anti-flexible structure.               
\end{definition}

\subsection{Properties }

Now, let us discuss some properties of AFAs. Unlike an LSA or RSA, an AFA is its own opposite algebra. That is,  $x \cdot y \to y \cdot x$  leaves the AFA associator identity invariant. More generally, the following holds: 

\begin{proposition}
If $({\cal A}, \, \cdot)$ is an algebra that satisfies eq.~\eqref{eq:associator_sigma} with respect to the permutation $\sigma$, and we define the product
 \begin{equation*}
    \circ: {\cal A} \times {\cal A} \longrightarrow {\cal A}  \quad \textrm{as} \quad (x,y) \mapsto y \cdot x
\end{equation*}
then, the algebra $({\cal A},\, \circ)$ satisfies eq.~\eqref{eq:associator_sigma} with respect to the permutation $\tau = (1 \hspace{1mm} 3) \, \sigma \, (1 \hspace{1mm} 3)$, i.e., with respect to the permutation resulting from the conjugation of $\sigma$ by $(1 \hspace{1mm} 3)$.
\end{proposition}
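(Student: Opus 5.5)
The plan is to first express the associator of the opposite algebra $({\cal A},\circ)$ through the associator of $({\cal A},\cdot)$, and then transport the identity \eqref{eq:associator_sigma} through this relation. Write $(x,y,z)_\circ$ and $(x,y,z)_\cdot$ for the two associators and set $\rho=(1\;3)$. Unwinding the definition of $\circ$ gives
\begin{equation*}
(x,y,z)_\circ=(x\circ y)\circ z-x\circ(y\circ z)=z\cdot(y\cdot x)-(z\cdot y)\cdot x=-(z,y,x)_\cdot .
\end{equation*}
In index form this reads $(x_1,x_2,x_3)_\circ=-(x_{\rho(1)},x_{\rho(2)},x_{\rho(3)})_\cdot$ for all $x_1,x_2,x_3\in{\cal A}$. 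This single identity is the key step. It says that passing to the opposite algebra reverses the order of the arguments and flips the sign.

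Next I handle the left-hand side. I set $y_i=x_{\rho(i)}$ and apply the hypothesis \eqref{eq:associator_sigma} for $\cdot$ to the triple $(y_1,y_2,y_3)$. Since $y_{\sigma(i)}=x_{\rho\sigma(i)}$, this gives
\begin{equation*}
(x_1,x_2,x_3)_\circ=-(y_1,y_2,y_3)_\cdot=-(y_{\sigma(1)},y_{\sigma(2)},y_{\sigma(3)})_\cdot=-(x_{\rho\sigma(1)},x_{\rho\sigma(2)},x_{\rho\sigma(3)})_\cdot .
\end{equation*}

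For the right-hand side with $\tau=\rho\sigma\rho$, I apply the key identity to the triple $z_i=x_{\tau(i)}$. This yields $(x_{\tau(1)},x_{\tau(2)},x_{\tau(3)})_\circ=-(z_{\rho(1)},z_{\rho(2)},z_{\rho(3)})_\cdot$, and $z_{\rho(i)}=x_{\tau\rho(i)}$. Since $\rho^2=\mathrm{id}$, we have $\tau\rho=\rho\sigma$. Hence the right-hand side equals $-(x_{\rho\sigma(1)},x_{\rho\sigma(2)},x_{\rho\sigma(3)})_\cdot$, which matches the left-hand side and proves the claim.

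The main obstacle is purely the index bookkeeping. One has to check that the substitution $y_i=x_{\rho(i)}$ composes permutations in the order $\rho\sigma$ and not $\sigma\rho$. The convention used in \eqref{eq:associator_sigma}, where the hypothesis holds for all triples, is what makes this composition consistent. I would sanity-check the result on $\sigma=(1\;2)$: the conjugate $\rho\sigma\rho=(2\;3)$, which recovers the known fact that the opposite of an LSA is an RSA. For $\sigma=(1\;3)$ the conjugate is $\sigma$ itself, which recovers the self-opposite property of AFAs.
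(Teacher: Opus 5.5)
Your proof is correct and follows the paper's route. You rewrite the opposite associator as $-(x_3,x_2,x_1)$ in the original algebra, apply the hypothesis, and convert back using $(1\;3)^2=\mathrm{id}$. Your bookkeeping is actually cleaner than the paper's: with the usual right-to-left composition, the correct intermediate permutation is your $\rho\sigma$ rather than the paper's $\sigma\,\sigma'$, though the final conjugate $\rho\sigma\rho$ is the same either way.
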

\begin{proof}
Denote the associator in $({\cal A}, \, \circ)$ by $[\cdot \, , \, \cdot \, , \, \cdot]$. Then, the claim follows from the following:  
\begin{eqnarray}   
 [x_1,x_2,x_3] &=& - (x_{\sigma^{\prime}(1)}, x_{\sigma^{\prime}(2)}, x_{\sigma^{\prime}(3)}) = -(x_{\sigma \, \sigma^{\prime}(1)}, x_{\sigma \, \sigma^{\prime}(2)}, x_{\sigma \, \sigma^{\prime}(3)})   \nonumber  \\   
 &=&  [x_{\sigma^{\prime} \, \sigma \, \sigma^{\prime}(1)}, \, x_{\sigma^{\prime} \, \sigma \, \sigma^{\prime}(2)}, \, x_{\sigma^{\prime} \, \sigma \, \sigma^{\prime}(3)}]
\end{eqnarray}
where $\sigma^{\prime}$ denotes the permutation $(1 \hspace{1mm} 3)$. 
\end{proof}
As per Proposition 2.1 above, AFAs are their own opposite algebras. This contrasts with LSAs and RSAs, which are mutually opposite to each other.

We find the following relations between LSAs, RSAs, and AFAs:
\begin{proposition}
Let $\left( {\cal A}, \cdot  \right)$ be an algebra over a field ${\mathbb F}$ (or more generally, over a ring). If ${\cal A}$ satisfies the conditions of any two: a LSA, RSA or AFA, then it also satisfies the third.  
\end{proposition}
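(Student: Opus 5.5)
The plan is to argue at the level of permutation actions on the associator, with no computation on specific elements. For a permutation $\sigma \in S_3$, say that ${\cal A}$ is \emph{$\sigma$-invariant} if eq.~\eqref{eq:associator_sigma} holds for all $x_1,x_2,x_3 \in {\cal A}$. Write $f(x_1,x_2,x_3) = (x_1,x_2,x_3)$ for the associator viewed as a trilinear map, and let $S_3$ act on trilinear maps by $(\sigma \cdot f)(x_1,x_2,x_3) = f(x_{\sigma(1)},x_{\sigma(2)},x_{\sigma(3)})$. In this language, LSA, RSA and AFA mean $f$ is fixed by $(1\;2)$, $(2\;3)$ and $(1\;3)$ respectively.

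The key observation is that the set $H = \{\sigma \in S_3 : \sigma\cdot f = f\}$ of permutations fixing $f$ is a subgroup of $S_3$. It contains the identity, and it is closed under composition. To check closure concretely, suppose $f$ is $\sigma$-invariant and $\tau$-invariant. Substitute $y_i = x_{\sigma(i)}$ into the $\tau$-identity to get
\[
(x_{\sigma(1)},x_{\sigma(2)},x_{\sigma(3)}) = (x_{\sigma\tau(1)},x_{\sigma\tau(2)},x_{\sigma\tau(3)}).
\]
Combining this with the $\sigma$-identity shows that $f$ is $\sigma\tau$-invariant. Since $S_3$ is finite, closure under composition already makes $H$ a subgroup.

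Now suppose ${\cal A}$ satisfies two of the three conditions. Then $H$ contains two distinct transpositions. Any two distinct transpositions in $S_3$ generate all of $S_3$; for instance, $(1\;2)(2\;3)$ is a 3-cycle, and a transposition together with a 3-cycle generates $S_3$. Hence $H = S_3$, and in particular $H$ contains the third transposition. The third condition therefore holds.

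For a concrete write-up of each case, the third transposition can be written as a conjugate of one of the given ones, using only substitutions. For example, $(1\;3) = (1\;2)(2\;3)(1\;2)$. So LSA together with RSA gives
\[
(x,y,z) = (y,x,z) = (y,z,x) = (z,y,x),
\]
applying LSA, then RSA, then LSA, each time to the appropriately renamed variables. The other two cases are handled in the same way, using $(1\;2) = (2\;3)(1\;3)(2\;3)$ and $(2\;3) = (1\;3)(1\;2)(1\;3)$.

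The only delicate point is keeping track of whether a substitution composes permutations on the left or on the right. Since every identity used here comes from conjugating a transposition by a transposition, this ordering issue does not affect the result. No division is used anywhere, so the argument holds over any commutative ring, as the statement allows.
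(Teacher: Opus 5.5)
Your proof is correct, but it takes a different route from the paper. The paper argues through the Akivis identity, eq.~\eqref{eq:jacobi_identity}. Any one of the three conditions already makes ${\cal A}$ Lie-admissible, so the full signed sum $\sum_{\sigma\in S_3}{\rm sgn}(\sigma)\,(x_{\sigma(1)},x_{\sigma(2)},x_{\sigma(3)})$ vanishes. The paper then cancels the two $3$-cycle terms against the terms for $\tau_1$ and $\tau_2$, using the two assumed invariances, which leaves $(x_1,x_2,x_3)=(x_{\tau_3(1)},x_{\tau_3(2)},x_{\tau_3(3)})$.

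You instead observe that the permutations fixing the associator form a subgroup, namely a stabilizer under a genuine left action. Two distinct transpositions therefore force all of $S_3$. Your explicit chains, such as $(x,y,z)=(y,x,z)=(y,z,x)=(z,y,x)$, are just the conjugation $(1\;3)=(1\;2)(2\;3)(1\;2)$ made concrete.

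What your route buys:
\begin{itemize}
\item It needs neither the Jacobi/Akivis identity nor any sign bookkeeping.
\item It uses no property of the scalars at all.
\item It gives a stronger conclusion: the associator is invariant under every element of $S_3$, including the $3$-cycles.
\item It makes the composition-convention issue you mention genuinely harmless. The paper's pairing of the $\sigma_1$-term with the $\tau_1\sigma_1$-term does depend on it. With the indexing $(x_{\sigma(1)},x_{\sigma(2)},x_{\sigma(3)})$ and right-to-left composition, $\tau_1$-invariance equates the terms for $\sigma$ and $\sigma\tau_1$, not $\tau_1\sigma$. Under that convention the pairing as written would silently use the $\tau_3$-identity being proved; the paper's ``without loss of generality'' is what absorbs this.
\end{itemize}

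What the paper's version offers in return is a direct tie to its Lie-admissibility theme: the result appears as a consequence of the vanishing of the total signed sum of associators.
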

\begin{proof}
This follows from eq.~\eqref{eq:jacobi_identity}. Let $\tau_1$ and $\tau_2$ be any two distinct transpositions in $S_3$, and let $\tau_3$ be the remaining of the three transpositions. Suppose that eq.~\eqref{eq:associator_sigma} holds with respect to $\tau_1$ and $\tau_2$. Furthermore, let $\sigma_1$ and $\sigma_2$ be any of the two distinct $3$-cycles in $S_3$. Then, by equation \eqref{eq:jacobi_identity} we have: 
\begin{align*}
0 = &\  (x_1, x_2, x_3)  - (x_{\tau_3(1)}, x_{\tau_3(2)}, x_{\tau_3(3)})    \\
   &\  + (x_{\sigma_1(1)}, x_{\sigma_1(2)}, x_{\sigma_1(3)}) - (x_{\tau_1 \sigma_1 (1)}, x_{\tau_1 \sigma_1 (2)}, x_{\tau_1 \sigma_1 (3)})   \\
   &\  + (x_{\sigma_2(1)}, x_{\sigma_2(2)}, x_{\sigma_2(3)}) - (x_{\tau_2 \sigma_2 (1)}, x_{\tau_2 \sigma_2 (2)}, x_{\tau_2 \sigma_2 (3)}) 
\end{align*}
Here, the vanishing LHS is a consequence of eq.~\eqref{eq:associator_sigma} with respect to either one of $\tau_1$ or $\tau_2$. In the RHS, the permutations $\tau_1$ and $\tau_2$, without loss of generality, can be expressed as $\tau_2 \sigma_2$ and $\tau_1 \sigma_1$, respectively.  Hence, the third and fourth summands in the above equation cancel each other out (following eq.~\eqref{eq:associator_sigma} for $\tau_1$), as do the fifth and sixth summands (following eq.~\eqref{eq:associator_sigma} for $\tau_2$). We thus get   
\[  0 =  (x_1, x_2, x_3) - (x_{\tau_3(1)}, x_{\tau_3(2)}, x_{\tau_3(3)})  \]
which proves the result.  
\end{proof}
Notice the analogy here to the three alternate algebras: left, right, and flexible alternate algebras, respectively. There again, the existence of any two alternate algebras implies the third.



Furthermore, just like the product governing LSAs and RSAs, we find that AFAs too are neither commutative nor anticommutative. We prove this for all three algebras as follows.  
\begin{proposition} \label{prop: commutative_symmetric_algebra}
If $\left( {\cal A}, \cdot  \right)$ is either a commutative or an anticommutative algebra, then $\left( {\cal A}, \cdot  \right)$ does not satisfy eq.~\eqref{eq:associator_sigma} for an LSA, RSA or AFA. 
\end{proposition}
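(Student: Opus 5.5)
The plan is to read the statement in its only non-vacuous form. A commutative associative algebra, or the zero product, trivially satisfies all three associator identities. So what must be shown is that a commutative or anticommutative algebra satisfying eq.~\eqref{eq:associator_sigma} for an LSA, RSA or AFA has identically vanishing associator. In other words, it is associative and hence not a genuinely nonassociative algebra of that class. I will assume $\mathrm{char}\,\mathbb{F}\neq 2$ throughout, since the argument divides by $2$.

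\textbf{Step 1: a reversal identity.} In both the commutative and the anticommutative case I will show
\[
(x,y,z) = -(z,y,x) \quad \text{for all } x,y,z \in {\cal A}.
\]
In the commutative case this comes from rewriting each term: $(x\cdot y)\cdot z = z\cdot(y\cdot x)$ and $x\cdot(y\cdot z) = (z\cdot y)\cdot x$. In the anticommutative case each rewriting introduces two sign changes, which cancel, so the same equalities hold. Subtracting gives $(x,y,z) = z\cdot(y\cdot x) - (z\cdot y)\cdot x = -(z,y,x)$. Thus, writing $f(x_1,x_2,x_3)=(x_1,x_2,x_3)$, we get $f\circ(1\;3) = -f$, where $f\circ\sigma$ denotes $f$ with its arguments permuted by $\sigma$.

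\textbf{Step 2: the AFA case.} Combining Step 1 with the anti-flexible identity \eqref{def2.1} gives $f = f\circ(1\;3) = -f$. Hence $2f=0$, and so $f=0$.

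\textbf{Step 3: the LSA case.} Here $f\circ(1\;2) = f$ and $f\circ(1\;3) = -f$. Let $c=(1\;2)(1\;3)$, which is a $3$-cycle. Applying the two relations in succession gives $f\circ c = -f$, with the sign tracked carefully according to how permutations act on argument slots. Iterating, $f = f\circ c^3 = (-1)^3 f = -f$, so again $f=0$. The RSA case is identical with $(2\;3)$ in place of $(1\;2)$. Alternatively, the RSA case follows from the LSA case via Proposition 2.1, because the opposite of a commutative or anticommutative algebra is again commutative or anticommutative.

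I expect the main obstacle to be bookkeeping rather than substance. First, the way permuted arguments compose must be made consistent, so that the two relations genuinely combine into the $3$-cycle relation $f\circ c = -f$. A clean check is that the map $\sigma\mapsto$ (the sign by which $f\circ\sigma$ differs from $f$) would have to be a homomorphism $S_3\to\{\pm1\}$ that is trivial on one transposition and nontrivial on another. No such homomorphism exists, since all transpositions are conjugate, unless $f=0$. Second, the sign count in the anticommutative case of Step 1 must be verified.
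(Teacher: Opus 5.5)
Your proof is correct. In the AFA case it is the same as the paper's: both derive the reversal identity $(x,y,z)=-(z,y,x)$ from (anti)commutativity and combine it with $(x,y,z)=(z,y,x)$ to force the associator to vanish.

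For the LSA and RSA cases you take a different route. The paper applies the reversal identity to both sides of the LSA identity to obtain the RSA identity. It then invokes Proposition 2.2 (LSA and RSA together imply AFA) and falls back on the AFA case. You instead compose the two relations directly, e.g.\ $(x,y,z)=(y,x,z)=-(z,x,y)$, and iterate this cyclic relation three times to get $(x,y,z)=-(x,y,z)$.

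Your version is self-contained: it does not depend on Proposition 2.2, whose proof in the paper goes through the Akivis identity. Your sign-character remark also explains all three cases at once. When $\operatorname{char}\mathbb{F}\neq 2$, a nonzero trilinear map cannot be symmetric under one transposition and antisymmetric under a conjugate one (or, in the AFA case, under the same one). The paper's route has the advantage of reusing a result it has already established, but it leaves this underlying group-theoretic reason implicit.

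You are also right to make explicit two points the paper leaves tacit. First, what is actually proved is that the associator vanishes identically. Commutative associative algebras, such as the zero product, satisfy all three identities, so the paper's ``not an AFA'' must be read as ``not a nonassociative AFA.'' Second, the division by $2$ is genuinely needed. In characteristic $2$ your Step 1 shows that every commutative algebra, including nonassociative ones, satisfies the anti-flexible identity.
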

\begin{proof}
The associators of any commutative or anticommutative algebra satisfy the identity 
\begin{eqnarray}
(x_1, x_2, x_3) =  - (x_{\sigma^{\prime}(1)}, x_{\sigma^{\prime}(2)}, x_{\sigma^{\prime}(3)})  \label{anticommid}      
\end{eqnarray}
obtained by commuting or anticommuting the product $x \cdot y$, respectively; and where $\sigma^{\prime}$ refers to the permutation $( 1 \; 3 )$. Now suppose that the LHS of the above identity satisfies eq.~\eqref{eq:associator_sigma} for an AFA, that is, for $\sigma = ( 1 \; 3 )$. Then, all the associators of this algebra vanish, making it an associative algebra. Hence, $\left( {\cal A}, \cdot  \right)$ is not an AFA. 

Now suppose $\left( {\cal A}, \cdot  \right)$ satisfies  eq.~\eqref{eq:associator_sigma} for an LSA. Applying eq.~\eqref{anticommid} to both sides of the LSA identity leads to
\[  - (x_{\sigma^{\prime}(1)}, x_{\sigma^{\prime}(2)}, x_{\sigma^{\prime}(3)}) =  - (x_{\sigma^{\prime}(2)}, x_{\sigma^{\prime}(1)}, x_{\sigma^{\prime}(3)}) \]
which, in fact, yields an RSA (following the action of $\sigma^{\prime}$). But we have seen earlier from Proposition 2.2 that if $\left( {\cal A}, \cdot  \right)$ satisfies both, the LSA and RSA condition, then it also defines an AFA. This leads to a contradiction as we know from the argument following eq.~\eqref{anticommid} that $\left( {\cal A}, \cdot  \right)$ is not an AFA. This implies that the above assumption that $\left( {\cal A}, \cdot  \right)$ satisfies a LSA does not hold. A similar reasoning suggests $\left( {\cal A}, \cdot  \right)$ is not a RSA. 
\end{proof}

It is well-known that in the case of LSA structures, a product $x \cdot y$ defines an LSA structure if and only if the left-insertion operator $L:\g \rightarrow \gl(\g)$ defined by $L_x(y):=x \cdot y$ is a representation and $\operatorname{Id}_\g$ is a $1$-cocycle of $L$. In the case of AFA structures, we have the following characterization result (see also \cite{hounkonnou2016center}). 

\begin{proposition} \label{prop:csa_equivalences}
    Let $\g$ be a Lie algebra equipped with a bilinear product $(x,y) \mapsto x \cdot y$. Let $L, R \colon \g \to \gl(\g)$ denote the left and right multiplication operators defined by $L_x(y) = x \cdot y$ and $R_x(y) = y \cdot x$, and let $S_x = L_x + R_x$ be the symmetric multiplication operator. The following conditions are equivalent:
    \begin{enumerate}
        \item The product $x\cdot y$ defines an AFA structure on $\g$.
        
        \item The left and right multiplication operators satisfy the commutation relation
        \begin{eqnarray}
            [L_x, R_y] = [L_y, R_x] \qquad \text{for all } x, y \in \g. 
            \label{csacon}
        \end{eqnarray}
        
        \item The symmetric multiplication operators satisfy the condition
        \begin{eqnarray}
            [S_x, S_y] = \operatorname{ad}_{[x,y]} \qquad \text{for all } x, y \in \g,
        \end{eqnarray}
        where $\operatorname{ad}:\g \rightarrow \gl(\g)$ is the adjoint representation of $\g$. 
    \end{enumerate}
\end{proposition}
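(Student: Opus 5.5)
The plan is to rewrite everything in terms of associators, using the two elementary identities
\[
[L_x,R_z](y) = x\cdot(y\cdot z) - (x\cdot y)\cdot z = -(x,y,z), \qquad [L_z,R_x](y) = -(z,y,x).
\]
With these, the equivalence of conditions 1 and 2 is immediate. Condition~\eqref{csacon}, evaluated on an arbitrary element, says exactly $(x,y,z)=(z,y,x)$ for all $x,y,z$, which is~\eqref{def2.1}. No Lie-theoretic input is needed for this step.

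For the equivalence of conditions 1 and 3, I will expand
\[
[S_x,S_y]=[L_x,L_y]+[R_x,R_y]+[L_x,R_y]-[L_y,R_x],
\]
and write $\operatorname{ad}_{[x,y]}=L_{[x,y]}-R_{[x,y]}$. The latter holds because the bracket of $\g$ is the commutator of the product. Applying both sides to $z$ and grouping, the three pieces become:
\begin{align*}
([L_x,L_y]-L_{[x,y]})(z) &= -(x,y,z)+(y,x,z),\\
([R_x,R_y]+R_{[x,y]})(z) &= (z,y,x)-(z,x,y),\\
([L_x,R_y]-[L_y,R_x])(z) &= -(x,z,y)+(y,z,x).
\end{align*}
Introduce the defect $f(a,b,c):=(a,b,c)-(c,b,a)$, which satisfies $f(a,b,c)=-f(c,b,a)$. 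Regrouping the six associators, condition 3 becomes
\[
E(x,y,z):\quad -f(x,y,z)+f(y,x,z)-f(x,z,y)=0 \qquad\text{for all } x,y,z.
\]
From this, $1\Rightarrow 3$ is clear, since an AFA has $f\equiv 0$.

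The main obstacle is the converse: deducing $f\equiv 0$ from $E$. Swapping $x$ and $y$ in $E$ yields nothing new, because of the outer antisymmetry of $f$. So I will instead use the cyclically relabelled instance $E(z,x,y)$. After applying $f(a,b,c)=-f(c,b,a)$, it reads
\[
f(x,y,z)+f(y,x,z)+f(x,z,y)=0 .
\]
Combining this with $E(x,y,z)$, which gives $f(y,x,z)=f(x,y,z)+f(x,z,y)$, produces $2\bigl(f(x,y,z)+f(x,z,y)\bigr)=0$. Over ${\mathbb C}$ (or any field of characteristic $\neq 2$) this gives $f(x,z,y)=-f(x,y,z)$, and then $E$ forces $f(y,x,z)=0$ for all arguments. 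Hence the product is an AFA. I will note that this step uses characteristic $\neq 2$.
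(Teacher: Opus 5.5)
Your proof is correct. The part $1\Leftrightarrow 2$ is exactly the paper's argument; the part involving condition 3 follows a genuinely different route.

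\textbf{The paper's route.} The paper proves $2\Rightarrow 3\Rightarrow 1$ at the operator level. It expands both $[S_x,S_y]$ and $[\operatorname{ad}_x,\operatorname{ad}_y]=[L_x-R_x,L_y-R_y]$, and then uses that $\operatorname{ad}$ is a representation, i.e.\ the Jacobi identity of $\g$. This gives $[S_x,S_y]-\operatorname{ad}_{[x,y]}=2\bigl([L_x,R_y]-[L_y,R_x]\bigr)$, so condition 3 is literally twice condition 2.

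\textbf{Your route.} You use only the compatibility $\operatorname{ad}_w=L_w-R_w$ and never invoke Jacobi. You reduce condition 3 to the identity $E$ in the defect $f$, and then kill $f$ by combining relabelled instances of $E$. This can be shortened: adding $E(x,y,z)$ to $E(z,x,y)$ gives $2f(y,x,z)=0$ in one step.

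\textbf{Consistency check.} In terms of $f$, Jacobi (via the Akivis identity) reads $-f(x,y,z)+f(y,x,z)+f(x,z,y)=0$. Modulo this, your $E$ collapses to $-2f(x,z,y)=0$. That is precisely the paper's operator identity evaluated at $z$.

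\textbf{What each approach buys.} The paper's argument is shorter and more conceptual, but it relies on $\g$ already being a Lie algebra. Yours is slightly longer but more general. It shows that, for an arbitrary product with its commutator bracket, the condition $[S_x,S_y]=L_{[x,y]}-R_{[x,y]}$ alone forces anti-flexibility in characteristic $\neq 2$. Hence, by the Akivis identity, it also forces Lie-admissibility. Your explicit remark that characteristic $\neq 2$ is needed is also worthwhile: the paper's $3\Rightarrow 1$ step silently divides by the same factor $2$.
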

\begin{proof}
\mbox{}
    \begin{enumerate}
        \item[$1) \Rightarrow 2):$] Let $x,y \in \g$ be fixed. Given any $z \in \g$, we have that $[L_x,R_y](z)=(L_x \circ R_y-R_y \circ L_x)(z)=x \cdot (z \cdot y)-(x \cdot z) \cdot y=-(x,z,y)$. Analogously, $[L_y,R_x](z)=-(y,z,x)$. The AFA condition implies $(x,z,y)=(y,z,x)$ and consequently $[L_x,R_y]=[L_y,R_x]$. 
        \item[$2) \Rightarrow 3):$] The bilinearity of the bracket implies that $[S_x,S_y]=[L_x+R_x,L_y+R_y]=[L_x,L_y]+[L_x,R_y]+[R_x,L_y]+[R_x,R_y]$. Now, the assumption $[L_x, R_y] = [L_y, R_x]$ together with the antisymmetry of the bracket gives us the equation  $[L_x,R_y]+[R_x,L_y]=0$, therefore $[S_x,S_y]=[L_x,L_y]+[R_x,R_y]$. An analogous argument shows that $[\operatorname{ad}_x,\operatorname{ad}_y]=[L_x,L_y]+[R_x,R_y]$ and, since $\operatorname{ad}$ is a representation, this entails that $[S_x, S_y] = \operatorname{ad}_{[x,y]} $.  
        \item[$3) \Rightarrow 1):$] The equality $[S_x, S_y] = \operatorname{ad}_{[x,y]} $ together with the fact that $\operatorname{ad}$ is a representation implies that  $[L_x,R_y]+[R_x,L_y]=0$. Evaluating this linear map at any point gives us precisely the AFA condition on the associator.  
    \end{enumerate}
\end{proof}
As a simple consequence, we have:

\begin{corollary}
If the bilinear product $\cdot: \g \times \g \mapsto \g, (x,y) \mapsto x\cdot y$ is an LSA structure and the right-insertion operator $R: \g \rightarrow \gl(\g)$ is an anti-representation of $\g$, then $\cdot$ is an AFA structure.      
\end{corollary}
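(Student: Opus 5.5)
We reduce the corollary to condition 2) of Proposition~\ref{prop:csa_equivalences}: it suffices to show $[L_x,R_y]=[L_y,R_x]$ for all $x,y\in\g$. The route goes through condition 3), or equivalently through the commutator identities for $L$ and $R$.

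First, since $\cdot$ is an LSA structure on $\g$, the left-insertion operator is a representation, $[L_x,L_y]=L_{[x,y]}$. Explicitly, the LSA identity $(x,y,z)=(y,x,z)$ evaluated at $z$ gives exactly $L_{x\cdot y - y\cdot x}=[L_x,L_y]$. By hypothesis, $R$ is an anti-representation, so $[R_x,R_y]=-R_{[x,y]}$. Adding these gives
\[
[L_x,L_y]+[R_x,R_y]=L_{[x,y]}-R_{[x,y]}=\operatorname{ad}_{[x,y]},
\]
because $L_z-R_z=\operatorname{ad}_z$ when the bracket is the commutator.

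Next, $\operatorname{ad}$ is a representation, and we expand $[\operatorname{ad}_x,\operatorname{ad}_y]=[L_x-R_x,L_y-R_y]$:
\[
[\operatorname{ad}_x,\operatorname{ad}_y]=[L_x,L_y]+[R_x,R_y]-\big([L_x,R_y]+[R_x,L_y]\big).
\]
Comparing with the previous display, which says $[L_x,L_y]+[R_x,R_y]=\operatorname{ad}_{[x,y]}=[\operatorname{ad}_x,\operatorname{ad}_y]$, forces $[L_x,R_y]+[R_x,L_y]=0$. This is $[L_x,R_y]=[L_y,R_x]$, i.e.\ condition 2). Proposition~\ref{prop:csa_equivalences} then yields the AFA property. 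Equivalently, one can expand $[S_x,S_y]=[L_x+R_x,L_y+R_y]$ and use the vanishing cross terms to verify condition 3) directly.

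The only step requiring care is confirming that the LSA condition gives $[L_x,L_y]=L_{[x,y]}$ with the correct sign convention, and that $L-R=\operatorname{ad}$. Both are one-line associator computations. Everything else is bilinear bookkeeping, so no serious obstacle is expected.
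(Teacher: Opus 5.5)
Your proof is correct and follows essentially the route the paper intends: the corollary is stated as an immediate consequence of Proposition~\ref{prop:csa_equivalences}, and you combine $[L_x,L_y]=L_{[x,y]}$, $[R_x,R_y]=-R_{[x,y]}$ and $[\operatorname{ad}_x,\operatorname{ad}_y]=\operatorname{ad}_{[x,y]}$, with $\operatorname{ad}=L-R$, to force $[L_x,R_y]+[R_x,L_y]=0$, which is exactly the bookkeeping in that proposition's proof. As a side remark, $R$ being an anti-representation is precisely the RSA identity $(x,y,z)=(x,z,y)$, so the statement is also an instance of Proposition 2.2.
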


Note, however, that for the bulk of what follows in this work, we will be less concerned with situations  where ${\cal A}$ is simultaneously a LSA, RSA and AFA. Rather, we work in the set-up where ${\cal A}$ is exclusively one of these algebras.

\subsection{Geometric Interpretation } 
 
To understand what AFAs geometrically refer to, it is instructive to examine the left and right insertion operators $L_x$ and $R_x$ respectively, which satisfy eq.~\eqref{csacon}.  Unlike LSAs or RSAs, for AFAs (that are not also LSAs or RSAs), $L$ or $R$ by themselves, are not  representations of the associated Lie algebra. However, $L - R$ yields a linear representation. In fact, this is the adjoint representation of the Lie algebra: 
\begin{eqnarray}
ad_x = L_x - R_x 
\label{ad}
\end{eqnarray}
By definition, elements $ad_x$ satisfy 
\[  [ ad_x , ad_y ] = ad_{ [x , y] }    \] 
Substituting eq.~(\ref{ad}) in the above identity and using eq.~(\ref{csacon}) gives
\begin{eqnarray}
[ L_x , L_y ]  - L_{ [x , y] }  =  -  [ R_x , R_y ]  - R_{ [x , y] } 
\label{csacon2}
\end{eqnarray}
In case of LSAs, we had that the left-hand side of eq.~(\ref{csacon2}) is identical to zero (with no relations between the $R_x$ operators); whereas for RSAs, the right-hand side of eq.~(\ref{csacon2}) becomes identical to zero (without any relations between the operators $L_x$). However, for AFAs (that are not also LSAs and RSAs), eq.~(\ref{csacon2}) presents a non-trivial "matching condition". To see what exactly is being matched here, recall that in the case of LSAs, the operator $L_x$ was identified with the affine connection $\nabla_x$ on a manifold $M$ with vector fields $x$. Then $[ L_x , L_y ]  - L_{ [x , y] }  = 0$ indicates the vanishing of the curvature tensor. Likewise, for RSAs, one can identity $- R_x$ with an affine connection $\widetilde{\nabla}_x$, and then $[ R_x , R_y ]  + R_{ [x , y] } = 0$ indicates a vanishing curvature with respect to the connection $\widetilde{\nabla}_x$.  

\begin{remark}
Therefore, for AFAs, eq.~(\ref{csacon2}) is geometrically interpreted as the admissibility of two connections $\nabla_x$ and $\widetilde{\nabla}_x$, each with non-vanishing curvature, such that the equality indicates a matching of the curvature tensors with respect to the two connections.  
\end{remark}

\subsection{Examples}

Let us now consider some examples of AFAs. Note that most of our discussion in this work will only involve finite-dimensional pre-Lie algebras, unless otherwise stated.   
\begin{example}
Consider a 3-dimensional algebra with generators $e_1, e_2, e_3$ defining the \\ following  product:  
\[ e_1 \cdot e_1 =  \, e_1  \qquad   e_2 \cdot e_2 = \lambda_2 \,\, e_2  \qquad   e_3 \cdot e_3 = \lambda_3 \,\, e_3     \]   
\[e_1 \cdot e_2 =  \, e_2  \qquad   e_1 \cdot e_3 = \,   e_3  \]  


and all other generator multiplications vanishing. The scalars $\lambda_2,  \lambda_3 \in {\mathbb C}$. At least, one of $  \lambda_2 $ or $ \lambda_3 $ must be non-zero.   Computing the associators, one confirms that this algebra is an AFA over ${\mathbb C}$. It is also straightforward to check that this algebra is not an LSA or RSA.  

The Lie algebra corresponding to the above AFA is:  

\[   [e_1 , e_2] =  \, e_2   \qquad    [e_1 , e_3] =  \, e_3
\]
and all other commutators being zero. Furthermore, one can check that this Lie algebra is solvable.

Besides the above-mentioned AFA, this Lie algebra is also known to admit a 3-dimensional LSA  (see Proposition 3.51 in \cite{burde2006left} for the case $\lambda = 1$). 


\end{example}

Note that a given Lie algebra may admit more than one pre-Lie structure. Hence, one may argue that pre-Lie structures serve as finer probes of geometry, rather than the corresponding Lie algebra (though not all Lie algebras may admit a pre-Lie structure). The full classification of all possible pre-Lie structures admissible by a given Lie algebra would be an important question to address in future work.  

\begin{example}
\label{ex2.2}
Consider another Lie algebra, one that is solvable: 
\[  [e_1 , e_2] =   e_2   \qquad    [e_3 , e_1] =  e_3  \]
This admits the following AFA:
\[ e_1 \cdot e_2 = \delta  \,\, e_2  \qquad   e_2 \cdot e_1 = - i \delta  \,\,   e_2      \]   
\[ e_3 \cdot e_1 = \delta  \,\,   e_3  \qquad  e_1 \cdot e_3 = - i \delta  \,\, e_3   \]  
where $\delta \in {\mathbb R}$.  This AFA leads to the stated Lie algebra after scaling the generators by $1/ ( \delta (1 + i) )$.

\end{example}

On the other hand, adding an extra relation to the above Lie algebra leads to
\[  [e_1 , e_2] =   e_2   \qquad    [e_3 , e_1] =  e_3 \qquad  [e_2 , e_3] = \frac{1}{2} e_1  
\]
which after scaling the three generators by $e_i \to 2 \, e_i$, gives  ${\mathfrak sl(2, \, {\mathbb C})}$. Do simple and semisimple Lie algebras such as  this admit AFAs or other pre-Lie structures? We will answer this question in Section 5.

\section{Lie-Admissibility and Solution Classes for AFAs}

We now turn our attention to Lie-admissibility criteria for AFAs with respect to a given Lie algebra. This analysis will help identify a few simple classes of solutions of AFAs. It will also serve as a computational procedure for determining the admissibility of an AFA, given the structure constants of a Lie algebra. With these methods, we will be able to find admissible AFAs for many solvable Lie algebras. Here, we will focus on projection order $p=1$ solutions only. A complete classification of all AFAs will be discussed in a future investigation. 

To set the notation, we consider any finite-dimensional  algebra over ${\mathbb C}$ with bilinear skew-symmetric brackets $[ \, , \, ]$, and generators $e_1, \; \cdots, \; e_n$. In this basis, the commutation relations take the form:
$ [e_i \, , \, e_j] = \sum_{k = 1}^n c_{ijk} \, e_k $  
where the structure constants $c_{ijk} \in {\mathbb C}$. 

The following definitions will be useful: 

\begin{definition}
\label{preal}
Consider an $n$-dimensional algebra ${\cal A}$ over a field ${\mathbb F}$, endowed with skew-symmetric brackets $[ \, , \, ]$ (though not necessarily a Lie algebra). $\left( {\cal A}, \, [ \, , \, ]  \, \right)$ has generators $\{ e_i \}$ $(1 \leq i \leq n)$, structure constants $c_{i j k}$, and defining relations $[e_i , e_j]  = \sum_{k = 1}^n \, c_{i j k}   \,\,  e_k$. We  define a  \textbf{Pre-Algebraic Structure} on ${\cal A}$ as the algebra $\left( {\cal A}, \; \cdot  \right)$ specified by the following relations:  
\begin{eqnarray}
 e_i \cdot e_j = \sum_{k = 1}^n \, d_{i j k}   \,\,  e_k 
\end{eqnarray}
such that 
\begin{eqnarray}
    [e_i \, , \, e_j] \, = \,  e_i \cdot e_j \, - \, e_j \cdot e_i
\end{eqnarray}
holds.

\end{definition}

From the above definition, it follows that the structure constants $d_{i j k}$ of $\left( {\cal A}, \; \cdot  \right)$ satisfy:  
\begin{eqnarray}
 d_{ijk} - d_{jik} = c_{ijk} 
\label{3.1.1} 
\end{eqnarray}
Note that pre-algebraic structures are not unique to a given algebra. Moreover, pre-Lie structures (in cases when they exist) are only a special subset of pre-algebraic structures. For this reason, pre-algebraic structures serve as a structural domain within which one may algorithmically look for possible pre-Lie structures (whenever those may be expected to exist) associated to a given Lie algebra.

Next, we introduce the notion of a `projection order' of an algebra as follows:
\begin{definition}
\label{porderdef}
Given a $n$-dimensional algebra $({\cal A}, \; \cdot)$ with product $e_i \cdot e_j  = \sum_{k = 1}^n d_{ijk} \; e_k$, in the basis $\{ e_i \}$, and structure constants $d_{ijk}$, the \textbf{Projection Order} $p$ is defined as the maximum of the $\ell_0$-norm of each product $e_i \cdot e_j$, over all pairs $i, \; j$
\begin{eqnarray}
     p = Max_{(i, \; j)} \; \{ \; | e_i \cdot e_j |_0  \;\; | \; 1 \leq i, j \leq n  \}  
\end{eqnarray}
where for any vector $v \in \mathbb{C}^n$, $|v|_0$ is defined as the number of non-zero components of $v$. The natural isomorphism between ${\cal A}$ and $\mathbb{C}^n$, where each $e_i$ maps to the vector in $\mathbb{C}^n$ with zeros everywhere except for $1$ at the $i$-th entry, equips ${\cal A}$ with an  $\ell_0$ norm. 
\end{definition}
For example, if $e_i \cdot e_j = d_{ijk} \; e_k$ (no summation over $k$ implied), then $p = 1$; on the other hand, for the same example, if there exists a pair $i, \; j$ such that $e_i \cdot e_j = d_{ijk_1}  e_{k_1} + d_{ijk_2}  e_{k_2}$, then $p = 2$. 

Note that the projection order as defined above is basis-dependent. Alternatively, one may define a canonical projection order, which minimizes $p$ over all possible bases choices. However, for our purposes here, Definition \ref{porderdef}  will suffice.  

In what follows, it will be useful to consider a class of algebras more general than Lie algebras, namely, Akivis algebras  \cite{akivis1976local}. These are defined as follows: 
\begin{definition}
\label{akivis}
An \textbf{Akivis Algebra} ${\cal A}$ over a field ${\mathbb F}$ is defined via an anti-symmetric bilinear product $[ x, y ]$ and a multi-linear ternary operation $\langle x, y, x \rangle$ such that the condition
\[ \sum_{\sigma \in S_{3}} \text{sgn}(\sigma) \langle x_{\sigma(1)}, x_{\sigma(2)}, x_{\sigma(3)} \rangle = J(x_{1}, x_{2}, x_{3})   \]
holds. Here, the Jacobinator is defined as 
\[ J(x_{1}, x_{2}, x_{3}) := \sum_{\sigma \in A_{3}} [[x_{\sigma(1)}, x_{\sigma(2)}], x_{\sigma(3)}] \]
and $A_3$ denotes the subgroup of $S_3$ corresponding to all three cyclic permutations of the set $\{ 1, 2, 3 \}$. 
\end{definition}
Note that, given any nonassociative algebra $({\cal A}, \; \cdot)$, a skew-symmetric product defined by $[ x, y ] := x \cdot y \, - \, y \cdot x$, along with a ternary operation $\langle x,y,z \rangle := (x \cdot y) \cdot z \, - \, x \cdot (y \cdot z)$, defines an Akivis algebra $\left( {\cal A}, \, [ \, , \, ]  \, \right)$ on ${\cal A}$. The pre-algebraic structures $({\cal A}, \; \cdot)$ that we will work with, in general, define an Akivis algebra. When the AFA admissibility conditions are satisfied, the latter yields a Lie algebra.

\subsection{Determining Lie-Admissibility }

In order to realize an AFA, a pre-algebraic structure (eq.~\eqref{3.1.1}) on  $\left( {\cal A}, \, [ \, , \, ]  \, \right)$ ought to satisfy the following: 
\begin{eqnarray}
 (e_i , e_j , e_j) \, &=& \, (e_j , e_j , e_i)   \label{3.1.2}  \\
 (e_i , e_j , e_m) \, &=& \, (e_m , e_j , e_i)    \label{3.1.3} 
\end{eqnarray}
for distinct $i, j, m$ from $1$ to $n$, and yield non-trivial solutions for $d_{ijk}$. 

For an algebra of projection order $p$, we have at most $\frac{n (n - 1) p}{2}$ structure constants $c_{ijk}$; and an equal number (to that of the $c_{ijk}'s$) of independent parameters $d_{ijk}$ in its pre-algebraic structure. The remaining parameters $d_{jik}$ are all constrained by eq.~\eqref{3.1.1}.


For any given projection order $p$, eqs. \eqref{3.1.2} and \eqref{3.1.3} then yield:  
\begin{eqnarray}
\sum_{a = 1}^p \sum_{r=1}^p \left( d_{ijk_r} \; d_{k_r j l^r_a} \; e_{l^r_a} \;   -    d_{i \overline{k}_r \overline{l^r}_a } \; d_{j j \overline{k}_r} \; e_{\overline{l^r}_a} \right)  =  \sum_{a = 1}^p \sum_{r=1}^p \left( - d_{j k_r l^r_a} \; d_{j i k_r} \; e_{l^r_a} \;   +    d_{j j \overline{k}_r} \; d_{\overline{k}_r i \overline{l^r}_a} \; e_{\overline{l^r}_a}   \right)   
\label{3.1.4}
\end{eqnarray}
and 
\begin{eqnarray}
\sum_{a = 1}^p \sum_{r=1}^p \left( d_{ijk_r} \;  d_{k_r m l^r_a} \; e_{l^r_a} \;   -    d_{i \overline{k}_r \overline{l^r}_a } \; d_{j m \overline{k}_r} \; e_{\overline{l^r}_a}  \right)  =  \sum_{a = 1}^p \sum_{r=1}^p  \left( - d_{m k_r l^r_a} \; d_{j i k_r} \; e_{l^r_a} \;  +    d_{m j \overline{k}_r} \; d_{\overline{k}_r i \overline{l^r}_a} \; e_{\overline{l^r}_a}  \right) 
 \label{3.1.5} 
\end{eqnarray}
respectively. The generators $e_{l^r_a}$, $e_{\overline{l^r}_a}$ have been included above to distinguish terms that are linearly independent. 
From a counting of the number of conditions imposed by eqs.~\eqref{3.1.4} and \eqref{3.1.5}, one can explicitly determine whether this system of equations has a non-trivial solution, or is overdetermined.

\subsubsection*{The $p = 1$ Case }

We will consider the case of projection order $p = 1$ in  detail. Here, the sums over $r$ and $a$ drop in eqs.~\eqref{3.1.4} and \eqref{3.1.5}; and for given $i$ and $j$ in $d_{ijk}$, the index $k$ takes a fixed value from $1$ to $n$. 

We will now derive four classes of possible solutions to the above equations based on the index structure of the parameters $d_{ijk}$. This yields four kinds of AFAs.  To study each class, we will consider representative indices $i, j, m$ mutually distinct. Then we have the following possibilities: \\ 
(I)  when $k$ is different from $i$ and $j$ in $d_{ijk}$,  \\ 
(II) when $k = i$, \\   
(III) when only associators of the type $(e_i , e_j , e_i)$ or / and $(e_i , e_i , e_i)$ are non-vanishing, \\
(IV) when $k = j$.

{\bf Solution Class I}: 

Let us start with class I. This involves terms in eq.~\eqref{3.1.4} that depend on generators $e_l$. Substituting eq.~\eqref{3.1.1} into these terms gives 
\begin{eqnarray}
  d_{ijk} \; ( d_{jkl}  -  c_{jkl} )  \; = \,  - \; d_{jkl} \; ( d_{ijk}  -  c_{ijk} )    \label{3.1.6}  
\end{eqnarray}
Taking into account each pair $i, j$, there are generally $n (n - 1)$ such equations in $\frac{n (n - 1)}{2}$ variables. Such a system of equations is overdetermined whenever the number of independent equations is greater than the number of non-vanishing independent variables. Hence, the only solution satisfying the above equations is: 
\[   d_{ijk} \; = \; c_{ijk}   \]
which implies
\[ d_{jik} = 0 \]
corresponding to each $d_{ijk}$. That implies that every associator in $\left( {\cal A}, \; \cdot \;  \right)$ vanishes. That is,  $\left( {\cal A}, \; \cdot \;  \right)$ is not a non-trivial AFA. The additional conditions of this class from eq.~\eqref{3.1.5} only further reinforce this point. 

{\bf Solution Class II}: 

Now, let us look at class II. Substituting eq.~\eqref{3.1.1} into eq.~\eqref{3.1.4} (for terms involving the generators $e_l$) with $k = i$ and $p = 1$ gives  
\begin{eqnarray}
  d_{iji} \;  d_{iji}   \; = \,  -  \; ( d_{iji}  -  c_{iji} ) \; ( d_{iji}  -  c_{iji} )     \label{3.1.61}
\end{eqnarray}
Solving for $d_{iji}$ yields 
\begin{eqnarray}
   d_{iji} = \frac{1 \pm i }{2} \; c_{iji}    \label{3.1.7}  
\end{eqnarray}
Using eq.~\eqref{3.1.1} in eq.~\eqref{3.1.7} we get 
\begin{eqnarray}
 d_{jii} \, =  \, \pm \, i \; \delta \; c_{iji} \, = \, \pm \, i \; d_{iji}
 \label{3.1.7III}
\end{eqnarray}
where $\delta$ denotes $( 1 \pm i )/2$.  
Solutions of this type realize valid AFAs as long as the system of equations is not overdetermined. The above are $\frac{n (n - 1)}{2}$ equations in $\frac{n (n - 1)}{2}$ variables. There is also another set of $\frac{n (n - 1)}{2}$ equations coming from \eqref{3.1.4} when indices $i \leftrightarrow j$ are swapped, but $k = i$ is held fixed. Here, parameters of the form $d_{iil}$ have to vanish to ensure that these extra constraints do not overdetermine the system. Then, the $n (n - 1)$ equations in eq.~\eqref{3.1.4} reduce to $\frac{n (n - 1)}{2}$ equation with as many variables, leading to the solution mentioned above. 

However, we are not done yet. Notice that eq.~\eqref{3.1.5} introduces additional constraints to the above parameters. Except when those additional equations vanish identically, the system becomes overdetermined. A quick check of the equations and variables demonstrates this:      
Eq.~\eqref{3.1.5} consists of $2 n (n - 1) (n - 2)$ conditions since each of the $i, j, m$ are independent indices, and the factor $2$ follows from the fact that for $p = 1$ the two terms on the LHS (respectively, RHS) of eq.~\eqref{3.1.5} multiply distinct generators.  Taking $k = i$ (for fixed $i, j, m$) in eq.~\eqref{3.1.5} gives the following conditions: 
\begin{eqnarray}
d_{iji} \;  d_{iml}   \; = \,  -  \;  d_{mil} \;  d_{jii}   \label{3.1.8}   \\
 - \; d_{ipq} \;  d_{jmp}   \; = \,   d_{mjp} \;  d_{piq}  
\end{eqnarray}
Then, writing eq.~\eqref{3.1.5} with indices $i \leftrightarrow j$ swapped and fixed $k = i$ gives 
\begin{eqnarray}
d_{jii} \;  d_{iml}  \;   = \,  -  \;  d_{mil} \;  d_{iji}   \label{3.1.9}  \\  
- \; d_{jlr} \;  d_{iml}   \; = \,   d_{mil} \;  d_{ljr} 
\end{eqnarray}
Notice that eq.~\eqref{3.1.8} is satisfied when $d_{mil} = d_{jii}$, while eq.~\eqref{3.1.9} requires $d_{mil} = d_{iji}$, which leads to a contradiction since $d_{iji} \neq d_{jii}$. Thus, indicating that the system is overdetermined unless sufficiently many of the conditions in eq.~\eqref{3.1.5} identically vanish. The remaining equations with parameters that are not of the form $d_{iji}$ or $d_{jii}$ are still covered by solutions corresponding to class I. 

{\bf Solution Class III}: 

Next, we have class III. Solutions of this class refer to the case when all associators of the algebra, except those of the form $(e_i , e_j , e_i)$ or / and $(e_i , e_i , e_i)$, vanish. Let us first consider the former instance. When this happens, we have: 
\begin{eqnarray}
  (e_i , e_j , e_i) \, = \,  d_{iil} \; c_{iji} \; \, e_l
\end{eqnarray}
This associator involves parameters of the form $d_{iji}$ (for parameters with  other index structures, this associator vanishes identically). Hence, the corresponding AFA only requires non-vanishing $d_{iil}$ and $d_{iji}$. The existence of the former is guaranteed from terms in eq.~\eqref{3.1.5} coming from the generators $e_{\overline{l}}$ (when ${\overline{k}} = i$). And $d_{iji} = c_{iji}$ arises from terms in eq.~\eqref{3.1.5} coming from the generators $e_l$ (when ${k} = i$). 

Analogously, when
\begin{eqnarray}
  (e_i , e_i , e_i) \, = \,  d_{ iik } \; c_{ kil  } \; \, e_l
\end{eqnarray}
is non-vanishing, we have non-trivial $d_{iik}$ and $d_{kil}$ coming from terms in eq.~\eqref{3.1.5}. In the examples that will follow, we shall demonstrate a Lie algebra arising from this class of AFAs.   

{\bf Solution Class IV}: 

Finally, let us examine class IV, where we take $k = j$. The terms in eq.~\eqref{3.1.4} involving generators $e_l$ now become
\begin{eqnarray}
  d_{ijj} \;  d_{jjl}    \; = \,  - \; d_{jij} \;  d_{jjl}      \label{cII1}  
\end{eqnarray}
when $d_{jjl} \neq 0$ holds, then we have
\begin{eqnarray}
  d_{ijj}   \; = \, \frac{c_{ijj}}{2} \, = \, - \; d_{jij}        
\label{cII2}  
\end{eqnarray}

Moreover, the terms in eq.~\eqref{3.1.4} involving generators $e_{\overline{l}}$   also fall into this class (though the indices $i, j$ are organized differently for these terms). Consider 
\begin{eqnarray} 
- \, d_{i \overline{k} \overline{l} } \; d_{j j \overline{k}} \, = \, d_{j j \overline{k}} \; d_{\overline{k} i \overline{l}} 
\label{cII3}  
\end{eqnarray}
When $d_{j j \overline{k}} \neq 0$ holds, then we have
\begin{eqnarray}
  d_{i \overline{k} \overline{l} }   \; = \, \frac{c_{i \overline{k} \overline{l} }}{2} \, = \, - \; d_{\overline{k} i \overline{l}}       \label{cII4}  
\end{eqnarray}

Likewise, solutions of the above form (eq.~\eqref{cII2} and eq.~\eqref{cII4}) also show up from conditions imposed in eq.~\eqref{3.1.5}. However, note that specific instances of the above solutions yield a valid AFA only when they are consistent with all the other conditions imposed by eqs.~\eqref{3.1.4} and \eqref{3.1.5}. As we shall see in the examples, solutions of this kind are closely related to conditions in class III (class IV solutions can also be obtained combining conditions of class III).

\subsubsection*{The $p > 1$ Case }

Let us also briefly examine what happens when $p > 1$. Rather than running through each solution class, here we will focus mostly on parameter counting to determine whether or not the system of equations is overdetermined.  

Firstly, let us look at the $p = 2$ reduction of eqs. \eqref{3.1.4} and \eqref{3.1.5}. The terms in eq.~\eqref{3.1.4} associated to generators $e_{l^r_a}$ yield: 
\begin{eqnarray}
     \sum_{a = 1}^2  d_{ijk_1} \; d_{k_1 j l^1_a} \; e_{l^1_a} \; + \;   \sum_{b = 1}^2  d_{ijk_2} \; d_{k_2 j l^2_b} \; e_{l^2_b} \; = \; -   \sum_{a = 1}^2  d_{j k_1 l^1_a} \; d_{j i k_1} \; e_{l^1_a} \; -  \sum_{b = 1}^2  d_{j k_2 l^2_b} \; d_{j i k_2} \; e_{l^2_b}
 \label{3.1.10}
\end{eqnarray}
This equation leads to a maximum of $4n(n-1)$ conditions on the parameters corresponding to the generators $e_{l^1_a}$ and $e_{l^2_b}$; and $2n(n-1)$ conditions when $e_{l^1_a} = e_{l^2_b}$ is the case. The parameters $d_{ijk_1}$ and $d_{ijk_2}$ correspond to the structure constants $c_{ijk_1}$ and $c_{ijk_2}$, respectively. Hence, there are at most $2n(n - 1)/2$ independent parameters of type $d_{ijk}$.  

The remaining terms of eq.~\eqref{3.1.4} associated to generators $e_{\overline{l^r}_a}$ give: 
\begin{eqnarray}
    -   \sum_{a = 1}^2  d_{i \overline{k}_1 \overline{l^1}_a } \; d_{j j \overline{k}_1} \; e_{\overline{l^1}_a} \; -  \sum_{b = 1}^2  d_{i \overline{k}_2 \overline{l^2}_b} \; d_{j j \overline{k}_2} \; e_{\overline{l^2}_b} =  \sum_{a = 1}^2  d_{j j \overline{k}_1} \; d_{\overline{k}_1 i \overline{l^1}_a} \; e_{\overline{l^1}_a} \; + \;   \sum_{b = 1}^2  d_{j j \overline{k}_2} \; d_{\overline{k}_2 i \overline{l^2}_b} \; e_{\overline{l^2}_b} 
\label{3.1.10part2}
\end{eqnarray}
The counting is similar to that in eq.~\eqref{3.1.10} above with a maximum of $4n(n-1)$ conditions. There are at most $2n(n - 1)/2$ independent parameters of type $d_{i\overline{k} \overline{l}}$, and at most $2n$ independent parameters of type       $d_{j j \overline{k} }$. Of course, the conditions in eqs.~\eqref{3.1.10} and \eqref{3.1.10part2} are not independent of each other as they involve overlapping parameters.

Examining eq.~\eqref{3.1.5}, we have:  
\begin{eqnarray}
&& \sum_{a = 1}^2  d_{ijk_1} \; d_{k_1 m l^1_a} \; e_{l^1_a} \; + \;   \sum_{b = 1}^2  d_{ijk_2} \; d_{k_2 m l^2_b} \; e_{l^2_b} \;  - \;   \sum_{c = 1}^2  d_{i k_3 l^3_c} \; d_{j m k_3} \; e_{l^3_c} \; - \;  \sum_{d = 1}^2  d_{i k_4 l^4_d} \; d_{j m k_4} \; e_{l^4_d}   \nonumber  \\  
&=&  - \;  \sum_{a = 1}^2  d_{m k_1 l^1_a} \; d_{j i k_1} \; e_{l^1_a} \; - \;  \sum_{b = 1}^2  d_{m k_2 l^2_b} \; d_{j i k_2} \; e_{l^2_b}  \; + \;  \sum_{c = 1}^2  d_{m j k_3} \; d_{k_3 i l^3_c} \; e_{l^3_c} \; + \;   \sum_{d = 1}^2  d_{m j k_4} \; d_{k_4 i l^4_d} \; e_{l^4_d}   \qquad 
 \label{3.1.11}
\end{eqnarray}
which imposes a maximum of $8n(n-1)(n-2)$ additional conditions on the parameters $d_{ijk}$, and $2n(n-1)(n-2)$ when generators $e_{l_a} = e_{l_b} = e_{l_c} = e_{l_d}$. Analogous to the $p=1$ set-up, similar solution classes also  appear for $p=2$, but now there is the possibility of mixing between  the classes, and also the possibility of completely new solution classes beyond the ones we  had for $p=1$. The maximum possible number of parameters $d_{ijk}$ is $n(n-1)$. For a given algebra, when all of these parameters are non-vanishing, then the number of conditions imposed on the $n(n-1)$   $d_{ijk}$s  by eq.~\eqref{3.1.10} is $2n(n-1)$, by eq.~\eqref{3.1.10part2} is another $2n(n-1)$, and an additional $2n(n-1)(n-2)$ conditions come from eq.~\eqref{3.1.11}. The same counting arguments as in $p=1$ apply here, and this is true even when one considers mixtures of the four solution classes. 

In general, for any $p$, we have at most $pn(n-1)/2$ independent parameters $d_{ijk}$ that one has to solve for. Eq.~\eqref{3.1.4} imposes a maximum of $p^2 n(n-1)$ conditions on the  $d_{ijk}$s. 
In addition, eq.~\eqref{3.1.5} imposes another set of $2 p^2 n(n-1)(n-2)$ conditions, at most. The actual number of parameters and equations that one has to solve depends on the specific algebra at hand. When the system is overdetermined, there is no admissible AFA. Otherwise, a non-trivial AFA can be found. In the examples that follow, we demonstrate some of these computations.  


\subsection{Computing AFAs }

Here, we consider examples of solvable Lie algebras for which the above admissibility criteria yield realizations of AFAs (restricting to examples with $p=1$).

\begin{example}
Let us begin with the same solvable Lie algebra that we encountered earlier in example \ref{ex2.2}: 
\[  [e_1 , e_2] =   e_2   \qquad    [e_3 , e_1] =  e_3  \]
For this, eqs.~\eqref{3.1.4} and \eqref{3.1.5} yield 
\begin{eqnarray}
d(2,1,2) \; d(2,1,2) &=& - \, d(1,2,2) \; d(1,2,2) \nonumber \\  
d(3,1,3) \; d(3,1,3) &=& - \, d(1,3,3) \; d(1,3,3)  \nonumber      
\end{eqnarray}
We get two decoupled equations, each involving one independent variable. These equations correspond to class II (with $p = 1$). The solution is thus:    
\begin{eqnarray*}
    d(2,1,2) =  \delta  \qquad \qquad    \; d(1,2,2)  = \pm \, i \, \delta  \\  
   d(3,1,3) =  \delta   \qquad  \qquad   \; d(1,3,3)  = \pm \, i \, \delta   
\end{eqnarray*}
where $\delta = ( 1 \pm i )/2$. Hence, this solvable Lie algebra admits a non-trivial AFA of class II. 
 
\end{example}

\begin{example}
On the other hand, the solvable Lie algebra with generators $e_1, e_2, e_3$ 
\[  [e_1 , e_2] =   e_1  \]
admits an AFA of class III. The symmetric non-vanishing associator can be read-off from the Lie algebra as
\begin{eqnarray*}
  (e_1 , e_2 , e_1) \, = \,  d(1,1,1) \, c_{121} \, e_1     
\end{eqnarray*}
Matching the terms in eq.~\eqref{3.1.5} corresponding to the $e_l$ and $e_{\overline{l}}$ generators, respectively, gives: 
\begin{eqnarray*}
 d(1,2,1) \; d(1,m,l) &=& - \, d(m,1,l) \; d(2,1,1)    \\
- d(1,1,1) \; d(2,m,1) &=&  \, d(m,2,1) \; d(1,1,1) 
\end{eqnarray*}  
Since $m \neq 1,2$ here, we get the following solution  
\begin{eqnarray*}
    d(1,1,1) =  1  \qquad \qquad  \; d(2,2,2)  =  1   \qquad \qquad  \;   d(1,2,1) = 1    
\end{eqnarray*}
with all other parameters being zero. Notice that there is an additional non-zero parameter $d(2,2,2)  =  1$. This comes from requiring that the associator $(e_1 , e_2 , e_2)$ vanish.  
    
\end{example}

\begin{example}
Finally, let us reconsider the same Lie algebra as in the previous example but find a class IV type solution for it. 
\[  [e_1 , e_2] =   e_1  \]
As before 
\begin{eqnarray*}
  (e_1 , e_2 , e_1) \, = \,  d(1,1,1) \, c_{121} \, e_1     
\end{eqnarray*}
But now we should have a non-vanishing $d(2,1,1)$ to be a class IV solution. This means there is at least one more non-trivial associator, which happens to be 
\begin{eqnarray*}
  (e_2 , e_2 , e_2) \, = \,  d(2,2,1) \, c_{121} \, e_1   
\end{eqnarray*}
which implies that we have a non-vanishing $d(2,2,1)$. 
Once again, matching the terms in eq.~\eqref{3.1.5} corresponding to the $e_l$ and $e_{\overline{l}}$ generators, respectively, gives: 
\begin{eqnarray*}
 d(1,2,1) \; d(1,m,l) &=& - \, d(m,1,l) \; d(2,1,1)    \\
- d(1,1,1) \; d(2,m,1) &=&  \, d(m,2,1) \; d(1,1,1) 
\end{eqnarray*}  
And swapping indices $i \leftrightarrow j$ in eq.~\eqref{3.1.5} gives:   
\begin{eqnarray*}
 d(2,1,1) \; d(1,m,l) &=& - \, d(m,1,l) \; d(1,2,1)    \\
- d(2,2,1) \; d(1,m,2) &=&  \, d(m,1,2) \; d(2,2,1) 
\end{eqnarray*} 
which yields the following solution  
\begin{eqnarray*}
    d(1,1,1) =  1  \qquad \qquad  \; d(2,2,1)  =  1   \qquad \qquad  \;   d(1,2,1) = \frac{1}{2}  \qquad \qquad  \;  d(1,2,1) = - \, \frac{1}{2}    
\end{eqnarray*}

\end{example}

\section{$A_3$-Associative and $S_3$-Associative Algebras} 

Besides LSAs, RSAs and AFAs, there also exist two other classes of nonassociative Lie-admissible algebras \cite{remm2002operades, goze2004lie}. We state these definitions below and provide examples.  

\begin{definition}
One defines an {\bf ${\mathbf{A_3}}$-Associative Algebra} as an algebra $\left( {\cal A}, \cdot  \right)$ over a field ${\mathbb F}$ equipped with a bilinear product  $(x , y) \mapsto x \cdot y$ such that the associator satisfies 
\begin{eqnarray}
    (x , y , z) + (y , z , x) + (z , x , y) = 0 
\label{caadef}
\end{eqnarray}
 for all $x, y, z \in {\cal A}$. 
\end{definition}
Using eq.~\eqref{caadef} in eq.~\eqref{eq:jacobi_identity}, one easily verifies that the commutator algebra based on $\left( {\cal A}, \cdot  \right)$ satisfies the Jacobi identity. As noted in \cite{goze2004lie}, eq.~\eqref{caadef} is simply the signed sum of associators in the subgroup of $S_3$ that contains even permutations; that is, the alternating group $A_3$. Hence, the name: $A_3$-associative algebra.

\begin{example}
An example of an $A_3$-associative algebra is the following:  
\[ e_1 \cdot e_1 =  \, e_1  \qquad   e_2 \cdot e_2 = \, e_2  \qquad   e_3 \cdot e_3 = \, e_3     \]   
\[e_1 \cdot e_2 =  \, e_3  \qquad   e_2 \cdot e_3 = \,   e_1  \qquad   e_3 \cdot e_1 = \,   e_2  \]  
\[e_2 \cdot e_1 =  \, e_3  \qquad   e_3 \cdot e_2 = \,   e_1  \qquad   e_1 \cdot e_3 = \,   e_2  \] 

This happens to be a three-dimensional commutative and nonassociative algebra that satisfies the axioms of a dihedral quandle of order 3. Quandles are algebraic structures studied in knot theory, that encapsulate the Reidemeister moves \cite{carter2012survey}. Perhaps what may not have been noted in the knot theory literature is that this particular involutory quandle also happens to be a Lie-admissible algebra.  
 
\end{example}
However, notice that the commutator algebra based on the example above leads to an abelian Lie algebra. In the next section, we will provide another example of an $A_3$-associative algebra that serves as a pre-Lie structure of an interesting non-abelian Lie algebra. 

Furthermore, $A_3$-associative algebras have an interesting geometric interpretation. Identifying the connection $\nabla_x$ on a manifold $M$ with the left insertion operator $L_x$, the curvature tensor can be expressed as 
\[ {\cal R}_L (x,y) \ z \, := \,  \left( \, [ L_x , L_y ]  - L_{ [x , y] } \, \right) \, z  \]
Then, the $A_3$-associativity condition gives 
\[ {\cal R}_L (x,y) \ z \, + \, {\cal R}_L (y, z) \ x \, + \, {\cal R}_L (z, x) \ y  \, = \, 0  \]
which is precisely the first Bianchi identity. An analogous identity is obtained using the right insertion operator $R_x$, with the associated curvature tensor ${\cal R}_R (x,y) \ z$.

Next, we have the following definition:
\begin{definition}
A \textbf{$\mathbf{S_3}$-Associative Algebra} as an algebra $\left( {\cal A}, \cdot  \right)$ over a field ${\mathbb F}$ equipped with a bilinear product  $(x , y) \mapsto x \cdot y$ such that the associator satisfies 
\begin{eqnarray}
    (x , y , z) + (y , z , x) + (z , x , y) = (y , x , z) + (x , z , y) + (z , y , x)
\label{daadef}
\end{eqnarray} 
 for all $x, y, z \in {\cal A}$. 
\end{definition}

In the terminology of \cite{goze2004lie}, this Lie-admissible algebra is the signed sum of associators, over $S_3$ as its own subgroup. Hence, the name: $S_3$-associative algebra.

\begin{example}
\label{eg2.4S3}
An example of a $S_3$-associative algebra is the following: 
\[ e_1 \cdot e_1 =  \, e_1  \qquad   e_2 \cdot e_2 = \, e_2  \qquad   e_3 \cdot e_3 = \, e_3     \]   
\[e_1 \cdot e_2 = \alpha \, e_3  \qquad   e_2 \cdot e_3 = \alpha  \, e_1  \qquad   e_3 \cdot e_1 = \alpha \,   e_2  \]  
\[e_2 \cdot e_1 = \beta \, e_3  \qquad   e_3 \cdot e_2 = \beta \,   e_1  \qquad   e_1 \cdot e_3 = \beta \,  e_2  \] 

where $\alpha \neq \beta$ are real constants (the case $\alpha = \beta$ simply yields an $A_3$-associative algebra).
    
\end{example}

Notice that $S_3$-associative algebras subsume every other class of Lie-admissible algebras, LSAs, RSAs, AFAs, $A_3$-associative algebras, as well as ordinary associative algebras. This is because this class admits the action of the full $S_3$ group on the set of associators. Of course, there also exist $S_3$-associative algebras which are exclusively $S_3$-associative, such as the one in the example above.

\section{Semisimple Lie Algebras and Pre-Lie Structures}

\subsection{Semisimple Lie Algebras and Admissibility of AFAs}

It is well-known that semisimple Lie algebras do not admit LSA structures (respectively, RSA). Thus, a natural question is: does the same thing hold for AFAs? In other words, is it true that if $\mathfrak{g}$ is a semisimple Lie algebra, then $\mathfrak{g}$ cannot be endowed with an AFA structure? In general, this turns out to be false.

We construct the following counterexample: 
\begin{example}
Consider the simple Lie algebra $\g=\mathfrak{sl}(2,\mathbb{C})$, whose Lie bracket is given by:  
\[  [ H , E] =  2 \, E   \qquad    [H , F] = - 2 \, F \qquad  [E , F] =  H  
\]
It is well-known that $\mathfrak{g}$ admits a grading given by
$$
\g=\g_{-1} \oplus \g_0 \oplus \g_1
$$
where $\g_{-1}=\langle F \rangle$, $\g_0=\langle H \rangle$ and $\g_1=\langle E \rangle$. As the above decomposition gives us a grading, we have that $[\g_i,\g_j] \subset \g_{_{i+j}}$ for all $i,j$, where $\g_i:=0$ for $|i| \geq 2$. Therefore, in order to construct an AFA product that is compatible with the grading, we may impose the condition $\g_i \g_j \subset \g_{i+j}$. This last condition imposes the following relations: 
$$
F \cdot F=0, \hspace{5mm} E \cdot E=0.
$$
Next, by Proposition \ref{prop: commutative_symmetric_algebra} we know that an AFA product cannot be commutative or skew-symmetric. Therefore, in order to get a product that is as asymmetric as possible and in such a way that the AFA condition holds, we also impose the following conditions: 
\begin{equation*}
    \g_1 \,  \g_0=0, \hspace{5mm} \g_{-1} \, \g_1=0 \; \textrm{ and } \; \g_0 \,  \g_{-1}=0.
\end{equation*}
This gives us the additional relations: 
$$
E \cdot H:=0, \hspace{5mm} F \cdot E:=0, \hspace{5mm} H \cdot F:=0.
$$
We can derive the other relations by using the equation $[x,y]=x \cdot y - y \cdot x$ together with the AFA equation. First, the former equation implies that:
\begin{equation*}
    \begin{split}
        H & =[E,F]=E \cdot F-F \cdot E=E \cdot F   \Rightarrow E \cdot F=H  \\
        -2F&=[H,F]=H \cdot F-F \cdot H=-F \cdot H  \Rightarrow F \cdot H=2F \\
        2E&=[H,E]=H \cdot E-E \cdot H=H \cdot E \Rightarrow H \cdot E=2E
    \end{split}
\end{equation*}
Finally, as $\g_0 \, \g_0 \subset \g_0$, there must be $\lambda \in \mathbb{C}$ such that $H \cdot H=\lambda H$. To determine $\lambda$ we now use the AFA condition $(x,y,z)=(z,y,x)$. On one hand, we should have:
$$
(F,E,H)=(F\cdot E) \cdot H-F \cdot (E \cdot H)=0-0=0.
$$
On the other hand, 
$$
(H,E,F)=(H\cdot E) \cdot F-H\cdot(E \cdot F)=2E \cdot F-H \cdot H=2H-\lambda H=(2-\lambda)H.
$$
The AFA condition implies $\lambda=2$. A straightforward verification shows that this product defines an AFA structure on $\g$. 
\label{sl2cex}
\end{example}

\begin{example}
Related to the above example, the standard change of basis from $\mathfrak{sl}(2,\mathbb{C})$ to $\mathfrak{su}(2)$ yields the Lie algebraic relations:
\[ [ \, X_i \, , \, X_j \, ] \, = \, 2 \,  \epsilon_{ijk} \, X_k  \]
Hence, $\mathfrak{su}(2)$ also admits an AFA, which is obtained by a basis transformation of the AFA in example \ref{sl2cex}: 
\[ X_1 \cdot X_1 =  \, i \, X_3  \qquad   X_2 \cdot X_2 = \, i \, X_3  \qquad   X_3 \cdot X_3 = \, 2 \, i \, X_3     \]   
\[ \; X_1 \cdot X_2 =  \, X_3  \qquad   X_2 \cdot X_3 = \,   X_1 + \, i \, X_2  \qquad   X_3 \cdot X_1 = \,  i \, X_1 +  X_2  \]  
\[ \;\; X_2 \cdot X_1 =  \, - X_3  \qquad   X_3 \cdot X_2 = \,  -  X_1 + \, i \, X_2   \qquad   X_1 \cdot X_3 = \,   i \, X_1 - X_2  \] 
Notice that even though this AFA is isomorphic to the one mentioned above, it does not follow from a Lie algebraic grading (which was crucial in that case), and it belongs to a different projection order ($p = 2$). 

\end{example}

\subsection{Semisimple Lie Algebras and $A_3$ / $S_3$ Associativity}

Besides AFAs, do semisimple Lie algebras admit other pre-Lie structures? Below, we shall answer this question in the affirmative. 

First, let us consider the following examples: 
\begin{example}  
The standard cross-product algebra in 3 dimensions is:  
\[ X_1 \cdot X_1 =  \, 0  \qquad   X_2 \cdot X_2 = \, 0  \qquad   X_3 \cdot X_3 = \, 0     \]   
\[ \; X_1 \cdot X_2 =  \, X_3  \qquad   X_2 \cdot X_3 = \,   X_1  \qquad   X_3 \cdot X_1 = \,   X_2  \]  
\[ \;\; X_2 \cdot X_1 =  \, - X_3  \qquad   X_3 \cdot X_2 = \,  - X_1  \qquad   X_1 \cdot X_3 = \,  - X_2  \] 

Using the Jacobi identity of the cross-product algebra, it follows that
\[ \sum_{\sigma \in A_{3}}  ( X_{\sigma(1)}, X_{\sigma(2)}, X_{\sigma(3)} ) = 0 \]
which yields an $A_3$-associative algebra.

As is well-known, the commutator algebra of the cross-product in 3 dimensions is $\mathfrak{su}(2)$. 

\end{example}

\begin{example}  
Another example is the $S_3$-associative algebra we had in Example \ref{eg2.4S3} above. When $\alpha \neq \beta$, this is exclusively $S_3$-associative. Its commutator algebra gives $[ \, e_i \, , \, e_j \, ] \, = \, (\alpha - \beta) \,  \epsilon_{ijk} \, e_k$, which, upon rescaling generators, is simply  $\mathfrak{su}(2)$. Hence, $\mathfrak{su}(2)$ admits both, $A_3$-associative and $S_3$-associative pre-Lie structures (in addition to the AFA seen earlier). 
\end{example}

In fact, it is straightforward to prove that:  
\begin{theorem}
$S_3$-associative algebras realize  universal pre-Lie structures for all Lie algebras over $\mathbb C$.  \end{theorem}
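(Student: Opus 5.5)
The proof rests on the Akivis identity, eq.~\eqref{eq:jacobi_identity}, which holds in any algebra. The $S_3$-associativity condition \eqref{daadef} says exactly that the signed sum of associators over all of $S_3$ vanishes. For an arbitrary bilinear product, the Jacobiator of the commutator equals that signed sum. Hence a product on $\g$ is an $S_3$-associative pre-Lie structure if and only if (i) $x\cdot y - y\cdot x = [x,y]_\g$, and (ii) the Jacobiator of the commutator vanishes. Condition (ii) is automatic when (i) holds, because $[\,,\,]_\g$ is a Lie bracket. So the plan is to show that every Lie algebra $\g$ over $\mathbb{C}$ admits \emph{some} bilinear product $\cdot$ with $x\cdot y - y\cdot x = [x,y]$. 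By the argument above, any such product is automatically $S_3$-associative.

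For existence, the simplest choice is $x\cdot y := \tfrac12 [x,y]$. Its commutator is $\tfrac12[x,y]-\tfrac12[y,x]=[x,y]$, so (i) holds. More generally, one can fix a basis $\{e_i\}$ with structure constants $c_{ijk}$ and set $d_{ijk}=\tfrac12 c_{ijk}$, in the spirit of Definition~\ref{preal} and eq.~\eqref{3.1.1}. Even more generally, $x\cdot y := \tfrac12[x,y]+B(x,y)$ works for any symmetric bilinear map $B:\g\times\g\to\g$, since the symmetric part drops out of the commutator. This gives a large family of $S_3$-associative pre-Lie structures on every Lie algebra, in any dimension, semisimple or not, and over any field of characteristic $\neq 2$.

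The one point that needs care is the reading of ``universal''. If pre-Lie structures are meant to be genuinely nonassociative, the purely anticommutative choice $\tfrac12[\,,\,]$ should be avoided, because Proposition~\ref{prop: commutative_symmetric_algebra} excludes anticommutative products only for LSA/RSA/AFA structures and not for $S_3$-associative ones. It is then cleaner to exhibit a $B$ making the product neither commutative nor anticommutative. For instance, take $B(x,y)=\phi(x)\phi(y)\,v$ with $\phi$ a nonzero linear functional and $v\neq 0$, or $B$ diagonal in a basis as in Example~\ref{eg2.4S3}. A quick check should confirm that some associator is nonzero for a suitable $B$. This check is the main potential obstacle, but it is elementary; for abelian $\g$, for example, $x\cdot y=\phi(x)\phi(y)v$ with $\phi(v)\neq 0$ gives $(v,v,v)=\phi(v)^3v-\phi(v)^3v=0$, so one should instead use a $B$ such as $e_i\cdot e_i=e_i$ from the examples. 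With that in hand, the theorem follows.
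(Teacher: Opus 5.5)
Your core step is the paper's own. By the Akivis identity, the signed $S_3$-sum of associators equals the Jacobiator of the commutator, and this vanishes once the commutator is the Lie bracket. So every pre-algebraic structure on a Lie algebra is $S_3$-associative.

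Where you go beyond the paper is existence. Its proof only treats ``any pre-algebraic structure (assuming its existence)''. Your explicit choice $x\cdot y=\tfrac12[x,y]$ (i.e.\ $d_{ijk}=\tfrac12 c_{ijk}$) shows that every Lie algebra actually carries one, which is what universality requires. Your family $\tfrac12[x,y]+B(x,y)$ with $B$ symmetric is in fact the set of all pre-algebraic structures, since the antisymmetric part of any such product is forced to be $\tfrac12[x,y]$. Your choice even gives an $A_3$-associative structure on every Lie algebra, because the cyclic sum of its associators is a constant multiple of the Jacobiator. The paper's cross-product example for $\mathfrak{su}(2)$ is exactly this construction.

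Your final paragraph is unnecessary and should be dropped, since parts of it are wrong. The paper explicitly counts associative algebras as $S_3$-associative. A ``genuinely nonassociative'' reading of the theorem cannot be intended anyway: on a one-dimensional Lie algebra, every pre-algebraic structure $e\cdot e=\lambda e$ is associative. Your proposed fix for abelian $\g$, taking $e_i\cdot e_i=e_i$ alone, is componentwise multiplication on $\mathbb{C}^n$, so it is associative as well. Nor was there any reason to discard $\tfrac12[\,,\,]$. Its associator is $(x,y,z)=\tfrac14[y,[z,x]]$, which does not vanish identically whenever $[\g,[\g,\g]]\neq 0$, in particular for every semisimple $\g$.
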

\begin{proof}
Given a $n-$dimensional Lie algebra $({\cal A}, \, [ \, , \,] \,)$, any pre-algebraic structure (assuming its existence) on ${\cal A}$ given by $({\cal A}, \, \cdot \,)$, and  satisfying Definition \ref{preal},  satisfies  eq.~\eqref{eq:jacobi_identity} (also known as the Akivis identity), with the associator defined by  $( x,y,z ) := (x \cdot y) \cdot z \, - \, x \cdot (y \cdot z)$.   Hence, we have $\sum_{\sigma \in S_{3}} \text{sgn}(\sigma) ( x_{\sigma(1)}, x_{\sigma(2)}, x_{\sigma(3)} ) = J(x_{1}, x_{2}, x_{3})$. But for a Lie algebra, the Jacobinator $J(x_{1}, x_{2}, x_{3}) = 0$, which implies     $\sum_{\sigma \in S_{3}} \text{sgn}(\sigma) ( x_{\sigma(1)}, x_{\sigma(2)}, x_{\sigma(3)} )  = 0$, and that, in turn, satisfies the definition of a $S_3$-associative algebra over ${\cal A}$.  
\end{proof}

Hence, semisimple Lie algebras over $\mathbb C$ admit multiple pre-Lie structures: from associative algebras, to AFAs, to $A_3$-associative, to $S_3$-associative algebras. The only ones they exclude are LSAs and RSAs.

\section{Conclusions and Discussion }

In this work, we have addressed the question of admissibility of pre-Lie structures for semisimple Lie algebras over ${\mathbb C}$. Among the five classes of nonassociative Lie-admissible algebras, LSAs and RSAs were already known to be non-admissible by semisimple Lie algebras of finite dimension $n \geq 3$   \cite{helmstetter1979radical, burde1994left, burde1998simple, burde2006left}. The third class: AFAs, were not extensively discussed before in this context. Contrary to what one may expect, here we present an example demonstrating that semisimple Lie algebras do not exclude AFAs as pre-Lie structures.  We then discuss the remaining two classes of nonassociative Lie-admissible algebras, the $A_3$-associative and $S_3$-associative types, and prove that  $S_3$-associative algebras are universal pre-Lie structures for any Lie algebra over ${\mathbb C}$, including semisimple ones. If indeed the only pre-Lie structures that semisimple Lie algebras exclude (for dimensions $n \geq 3$) are the ones associated to LSAs and RSAs, that implies the manifolds associated to semisimple Lie groups only admit torsion-free connections that are not flat. 

Our findings here also suggest an intriguing geometric interpretation: Recall that the six Lie-admissible algebras are uniquely labeled by the elements of the permutation group $S_3$. LSAs, RSAs and AFAs refer to the same conjugacy class of $S_3$, namely, those of order 2. The $A_3$- and $S_3$-associative algebras correspond to order 3 elements. This suggests a potential group $G$, whose action between these algebras may  transform pre-Lie structures of one class to another, with algebras belonging to the same conjugacy class of $S_3$, transforming among themselves. Since pre-Lie structures geometrically realize connections on Lie group manifolds, the proposed group action realizes a geometric transformation between connections. In mathematical physics, one would recognize these as gauge transformations on the principal $G$-bundle over the Lie group manifold, which in this case happens to be a non-commutative base-space. This suggests interesting classes of gauge theories on non-commutative group manifold geometries. Other approaches for extracting geometric and pregeometric data from purely algebraic constructions can be found in \cite{kauffman2022calculus, arsiwalla2024operator, chester2024quantization, arsiwalla2024pregeometric, arsiwalla2023pregeometry, rickles2025quantum}.

Another direction for future work involves going beyond Lie algebras over ${\mathbb C}$. One may ask what kinds of pre-Lie structures do Lie algebras defined over noncommutative rings admit? In fact, Lie algebras over noncommutative rings were first introduced by Berenstein and Retakh \cite{berenstein2008lie, berenstein2005noncommutative}. However, this remains a largely uncharted terrain, and the study of pre-Lie structures admissible by such Lie algebras may shed important geometric insights about the associated Lie groups over noncommutative rings. The latter have recently been discussed in \cite{alessandrini2022symplectic, greenberg2024mathrm}. We shall address this issue in a forthcoming publication.   


\section*{Acknowledgments}

Thank you to the organizers of the amazing Wolfram Summer School 2024, where this project was initiated. The authors also thank Prof. Gil Salgado for many useful discussions and critical feedback on the original version of the manuscript.


\bibliographystyle{eptcs}
\bibliography{hottrefs.bib}

@article{kauffman2022calculus,
  title={Calculus, Gauge Theory and Noncommutative Worlds},
  author={Kauffman, Louis H},
  journal={Symmetry},
  volume={14},
  number={3},
  pages={430},
  year={2022},
  publisher={MDPI}
}

@incollection{arsiwalla2023pregeometry,
  title={Pregeometry, Formal Language, and Constructivist Foundations of Physics},
  author={Arsiwalla, Xerxes D and Elshatlawy, Hatem and Rickles, Dean},
  booktitle={Quantum Gravity and Computation},
  pages={137--159},
  publisher={Routledge},
  year={2025}
}

@book{rickles2025quantum,
  title={Quantum Gravity and Computation: Information, Pregeometry, and Digital Physics},
  author={Rickles, Dean and Arsiwalla, Xerxes D and Elshatlawy, Hatem},
  year={2025},
  publisher={Taylor \& Francis}
}

@article{arsiwalla2024pregeometric,
  title={Pregeometric spaces from wolfram model rewriting systems as homotopy types},
  author={Arsiwalla, Xerxes D. and Gorard, Jonathan},
  journal={International Journal of Theoretical Physics},
  volume={63},
  number={4},
  pages={83},
  year={2024},
  publisher={Springer}
}

@article{chester2024quantization,
  title={Quantization of a new canonical, covariant, and symplectic hamiltonian density},
  author={Chester, David and Arsiwalla, Xerxes D. and Kauffman, Louis H. and Planat, Michel and Irwin, Klee},
  journal={Symmetry},
  volume={16},
  number={3},
  pages={316},
  year={2024},
  publisher={MDPI}
}

@article{arsiwalla2024operator,
  title={On the operator origins of classical and quantum wave functions},
  author={Arsiwalla, Xerxes D. and Chester, David and Kauffman, Louis H.},
  journal={Quantum Studies: Mathematics and Foundations},
  volume={11},
  pages={193--215},
  year={2024},
  publisher={Springer}
}

@article{burde2006left,
  title={Left-symmetric algebras, or pre-Lie algebras in geometry and physics},
  author={Burde, Dietrich},
  journal={Central European Journal of Mathematics},
  volume={4},
  pages={323--357},
  year={2006},
  publisher={Springer}
}

@article{burde1998simple,
  title={Simple left-symmetric algebras with solvable Lie algebra},
  author={Burde, Dietrich},
  journal={Manuscripta Mathematica},
  volume={95},
  number={3},
  pages={397--411},
  year={1998},
  publisher={Springer}
}

@article{burde1994left,
  title={Left-symmetric structures on simple modular Lie algebras},
  author={Burde, Dietrich},
  journal={Journal of Algebra},
  volume={169},
  number={1},
  pages={112--138},
  year={1994},
  publisher={Elsevier}
}

@article{burde1999left,
  title={Left-invariant affine structures on nilpotent Lie groups},
  author={Burde, Dietrich},
  journal={Habilitation, D{\"u}sseldorf},
  year={1999}
}

@article{helmstetter1979radical,
  title={Radical d'une alg{\`e}bre sym{\'e}trique {\`a} gauche},
  author={Helmstetter, Jacques},
  journal={Annales de l'institut Fourier},
  volume={29},
  number={4},
  pages={17--35},
  year={1979}
}

@article{goze2004lie,
  title={Lie-admissible algebras and operads},
  author={Goze, Michel and Remm, Elisabeth},
  journal={Journal of algebra},
  volume={273},
  number={1},
  pages={129--152},
  year={2004},
  publisher={Elsevier}
}

@article{remm2002operades,
  title={Op{\'e}rades Lie-admissibles},
  author={Remm, Elisabeth},
  journal={Comptes rendus. Math{\'e}matique},
  volume={334},
  number={12},
  pages={1047--1050},
  year={2002}
}

@inproceedings{carter2012survey,
  title={A survey of quandle ideas},
  author={Carter, J Scott},
  booktitle={Introductory lectures on Knot Theory: Selected lectures presented at the Advanced School and Conference on Knot Theory and its Applications to Physics and Biology},
  pages={22--53},
  year={2012},
  organization={World Scientific}
}

@article{berenstein2008lie,
  title={Lie algebras and Lie groups over noncommutative rings},
  author={Berenstein, Arkady and Retakh, Vladimir},
  journal={Advances in Mathematics},
  volume={218},
  number={6},
  pages={1723--1758},
  year={2008},
  publisher={Elsevier}
}

@article{berenstein2005noncommutative,
  title={Noncommutative double Bruhat cells and their factorizations},
  author={Berenstein, Arkady and Retakh, Vladimir},
  journal={International Mathematics Research Notices},
  volume={2005},
  number={8},
  pages={477--516},
  year={2005}
}

@article{alessandrini2022symplectic,
  title={Symplectic groups over noncommutative algebras},
  author={Alessandrini, Daniele and Berenstein, Arkady and Retakh, Vladimir and Rogozinnikov, Eugen and Wienhard, Anna},
  journal={Selecta Mathematica},
  volume={28},
  number={4},
  pages={82},
  year={2022},
  publisher={Springer}
}

@article{greenberg2024mathrm,
  title={$SL_2$-like Properties of Matrices Over Noncommutative Rings and Generalizations of Markov Numbers},
  author={Greenberg, Zachary and Kaufman, Dani and Wienhard, Anna},
  journal={arXiv preprint arXiv:2402.19300},
  year={2024}
}

@article{cayley1857xxviii,
  title={On the theory of the analytical forms called trees},
  author={Cayley, Arthur},
  journal={The London, Edinburgh, and Dublin Philosophical Magazine and Journal of Science},
  volume={13},
  number={85},
  pages={172--176},
  year={1857},
  publisher={Taylor \& Francis}
}

@inproceedings{vinberg1963convex,
  title={Convex homogeneous cones},
  author={Vinberg, EB},
  booktitle={Transl. Moscow Math. Soc},
  volume={12},
  pages={340--403},
  year={1963}
}

@article{koszul1961domaines,
  title={Domaines born{\'e}s homogenes et orbites de groupes de transformations affines},
  author={Koszul, Jean-Louis},
  journal={Bulletin de la Soci{\'e}t{\'e} Math{\'e}matique de France},
  volume={89},
  pages={515--533},
  year={1961}
}

@article{santilli1968introduction,
  title={An introduction to Lie-admissible algebras},
  author={Santilli, RM},
  journal={Suppl. Nuovo Cimento},
  volume={6},
  pages={1225--1249},
  year={1968}
}

@article{milnor1977fundamental,
  title={On fundamental groups of complete affinely flat manifolds},
  author={Milnor, John},
  journal={Advances in Mathematics},
  volume={25},
  number={2},
  pages={178--187},
  year={1977},
  publisher={Elsevier}
}

@inproceedings{hounkonnou2016center,
  title={Center-symmetric algebras and bialgebras: relevant properties and consequences},
  author={Hounkonnou, Mahouton Norbert and Dassoundo, Mafoya Landry},
  booktitle={Geometric Methods in Physics: XXXIV Workshop, Bia{\l}owie{\.z}a, Poland, June 28--July 4, 2015},
  pages={281--293},
  year={2016},
  organization={Springer}
}

@article{bhandari1972classification,
  title={On the classification of simple antiflexible algebras},
  author={Bhandari, Mahesh Chandra},
  journal={Transactions of the American Mathematical Society},
  volume={173},
  pages={159--181},
  year={1972}
}

@article{rodabaugh1965generalization,
  title={A generalization of the flexible law},
  author={Rodabaugh, D},
  journal={Transactions of the American Mathematical Society},
  volume={114},
  number={2},
  pages={468--487},
  year={1965},
  publisher={JSTOR}
}

@article{rodabaugh1967semisimple,
  title={On semisimple antiflexible algebras},
  author={Rodabaugh, DJ},
  journal={Portugaliae mathematica},
  volume={26},
  pages={261--271},
  year={1967}
}

@article{rodabaugh1972antiflexible,
  title={On antiflexible algebras},
  author={Rodabaugh, David J},
  journal={Transactions of the American Mathematical Society},
  volume={169},
  pages={219--235},
  year={1972}
}

@article{myung1978nonflexible,
  title={Nonflexible Lie-admissible algebras},
  author={Myung, Hyo Chul},
  journal={Hadronic J.;(United States)},
  volume={1},
  number={3},
  year={1978},
  publisher={Univ. of Northern Iowa, Cedar Falls}
}

@article{akivis1976local,
  title={Local algebras of a multidimensional three-web},
  author={Akivis, Maks Aizikovich},
  journal={Siberian Mathematical Journal},
  volume={17},
  number={1},
  pages={3--8},
  year={1976},
  publisher={Springer}
}

\end{document}